\documentclass[12pt, a4paper]{article}
 \usepackage{amsmath}
 \usepackage{amssymb}
\usepackage[margin=21mm]{geometry}
\usepackage[utf8]{inputenc}
\usepackage{graphics}
\usepackage{xspace}
\usepackage{color}
 \usepackage{amsthm}
 \usepackage[old]{old-arrows}
\usepackage[dvipsnames]{xcolor}
\usepackage{mathtools}
\usepackage{latexsym}
\frenchspacing
%%%%%%%%%%%%%%%%%%%%%%%%%%%%%%%%%%%%%%%%%%%%%%%%
\newcommand{\R}{{\mathbb R}}

\newcommand{\dyle}{\displaystyle}
\newcommand{\dint}{\dyle\int}

\newtheorem{Theorem}{Theorem}[section]
\numberwithin{Theorem}{section}
\newtheorem{Corollary}[Theorem]{Corollary}
\newtheorem{Lemma}[Theorem]{Lemma}
\newtheorem{Proposition}[Theorem]{Proposition}
\newtheorem{Definition}[Theorem]{Definition}

\newtheorem{remark}[Theorem]{Remark}
\usepackage[mathscr]{euscript}
\usepackage{mathrsfs}
 \usepackage{enumerate}
 \usepackage{cite}
 \usepackage{tikz}
 \usepackage[pagebackref]{hyperref}
 \usepackage{hyperref}

\numberwithin{equation}{subsection}

\catcode`@=12

\makeatother
\usepackage[english]{babel}

 \usepackage[hyperpageref]{backref}
\title{Spectrum properties of mixed operators under the mixed boundary conditions}
\date{}
\author{Lovelesh Sharma$^{1}\thanks{Corresponding author(sharma.94@iitj.ac.in)}$    \\
       \small $^{1}$ Department of Mathematics, Indian Institute of Technology Jodhpur, Rajasthan 342030, India \\
       % \small $^{2}$ LMAP (UMR E2S UPPA CNRS 5142) Bat. IPRA, Avenue de l’Université, F-64013 Pau, France\\
}

\providecommand{\keywords}[1]
{
  \small	
  \textbf{\textit{Keywords---}} #1
}
\begin{document}
\maketitle \vspace{-1.8\baselineskip}
\begin{abstract}
In this paper, we describe the spectrum properties of  mixed operators, precisely the superposition of the classical Laplace operator and the fractional Laplace operator in the presence of mixed  boundary conditions, that is 
 \begin{equation}  \label{1}
    \left\{\begin{split} \mathcal{L}u\: &= \lambda u,~~\text{in} ~\Omega, \\
      u&=0~~~~~\text{in} ~~{U^c},\\
 \mathcal{N}_s(u)&=0 ~~~~~\text{in} ~~{\mathcal{N}}, \\
 \frac{\partial u}{\partial \nu}&=0 ~~~~~\text{in}~~ \partial \Omega \cap \overline{\mathcal{N}},
    \end{split} \right.\tag{$P_\lambda$}
\end{equation}
where  $U= (\Omega \cup {\mathcal{N}} \cup (\partial\Omega\cap\overline{\mathcal{N}}))$, $\Omega \subseteq \mathbb{R}^n$ is a non empty bounded open
set with sufficiently smooth boundary $\partial\Omega$, say of class $C^1$, and $\mathcal{D}$, $\mathcal{N}$ are open subsets of $\mathbb{R}^n\setminus{\bar{\Omega }}$ such that $\overline{{\mathcal{D}} \cup {\mathcal{N}}}= \mathbb{R}^n\setminus{\Omega}$, $\mathcal{D} \cap {\mathcal{N}}=  \emptyset $ and $\Omega\cup \mathcal{N}$ is a bounded set with sufficiently smooth boundary, $\lambda >0$ is a  real parameter and
 $\mathcal{L}=  -\Delta+(-\Delta)^{s},~ \text{for}~s \in  (0, 1).$ 
\end{abstract}

\keywords {Mixed local-nonlocal operators, mixed boundary conditions, principal eigenvalue and eigenfunction.}\\

\textbf{Mathematics Subject Classification:} 47A75, 35J25, 35J20.
\section{Introduction}
We investigate the existence and classical properties of the eigenvalues and eigenfunctions to the following problem
 \begin{equation*} 
\left\{\begin{split} \mathcal{L}u\: &= \lambda u,~~ \text{in} ~\Omega, \\
      u&=0~~~~~\text{in} ~~{U^c},\\
 \mathcal{N}_s(u)&=0 ~~~~~\text{in} ~~{\mathcal{N}}, \\
 \frac{\partial u}{\partial \nu}&=0 ~~~~~\text{in}~~ \partial \Omega \cap \overline{\mathcal{N}},
    \end{split} \right.\tag{$P_\lambda$}
         \end{equation*}
  where  $U= (\Omega \cup {\mathcal{N}} \cup (\partial\Omega\cap\overline{\mathcal{N}}))$, $\Omega \subseteq \mathbb{R}^n$ is a non empty bounded open set with
sufficiently smooth boundary $\partial\Omega$ of class $C^1$, and $\mathcal{D}$, $\mathcal{N}$, respectively denoted open Dirichlet and Neumann set, are disjoint open subsets such that $\overline{{\mathcal{D}} \cup {\mathcal{N}}}= \mathbb{R}^n\setminus{\Omega}$
  	and $\Omega\cup \mathcal{N}$ is a bounded set with smooth boundary.   The following figures illustrate possible configurations of the Dirichlet region $\mathcal{D}$ 
and the Neumann region $\mathcal{N}$ with respect to the domain $\Omega$. 
The set $\mathcal{D}$ corresponds to the exterior portion where the  Dirichlet condition 
$u = 0$ is imposed, while $\mathcal{N}$ represents the complementary exterior region where the 
nonlocal Neumann condition $\mathcal{N}_s(u) = 0$ applies. 
On the boundary portion $\partial \Omega \cap \overline{\mathcal{N}}$, the classical Neumann condition 
$\frac{\partial u}{\partial \nu} = 0$ is prescribed. 
Depending on the geometry, the relative size and position of $\mathcal{D}$ and $\mathcal{N}$ may vary, 
as sketched below.
 \begin{center}
\begin{minipage}{0.48\textwidth}
\centering
\scalebox{0.65}{
\begin{tikzpicture}
  % Define grey color
  \definecolor{lightgrey}{RGB}{200, 200, 200}

  % Shaded annular region
  \fill[lightgrey] (0,0) circle (3cm);
  \fill[white] (0,0) circle (1.5cm); % inner circle to make it hollow

  % Outer and inner boundaries
  \draw[thick] (0,0) circle (3cm); % Outer circle (R)
  \draw[thick] (0,0) circle (1.5cm); % Inner circle (r)

  % Labels
  \node at (-2.2cm,1.5cm) {$\Omega$}; % domain label
  \node at (0,0) {$\mathcal{N}$};     % center point label
  \node at (3.4cm,0) {$\mathcal{D}$}; % Dirichlet boundary label
\end{tikzpicture}
}
\vspace{1em}

Fig. 1. Annular domain configuration
\end{minipage}
\hfill
\begin{minipage}{0.48\textwidth}
\centering
\scalebox{0.65}{
\begin{tikzpicture}
  % Define grey color
  \definecolor{lightgrey}{RGB}{200, 200, 200}

  % First circle (left)
  \fill[lightgrey] (-2,0) circle (2cm); % filled disk
  \draw[thick] (-2,0) circle (2cm);     % boundary
  \node at (-2,0) {$\Omega$};           % center label

  % Second circle (right)
  \fill[lightgrey] (2,0) circle (2cm);  % filled disk
  \draw[thick] (2,0) circle (2cm);      % boundary
  \node at (2,0) {$\mathcal{N}$};       % center label

  % Label D above and between the circles
  \node at (0,2.8) {$\mathcal{D}$};  
\end{tikzpicture}
}
\vspace{1em}

Fig 2.  Disconnected domain configuration
\end{minipage}
\end{center}
 Let $\lambda >0$ is a  real parameter, $\nu$ denotes the outward normal on $\partial\Omega\cap \overline{\mathcal{N}}$ and
 \begin{equation}\label{A}
\mathcal{L}=  -\Delta+(-\Delta)^{s},~ \text{for}~s \in  (0, 1).
 \end{equation}
The term ``mixed" describes an operator that combines local and nonlocal differential operators. In our case, the operator $\mathcal{L}$ in \eqref{1} is generated by the superposition of the classical Laplace operator $-\Delta$ and the fractional Laplace operator $(-\Delta)^{s}$ which is for a fixed parameter $s \in (0,1)$  defined by
$${(- \Delta)^{s}u(x)} = C_{n,s}~ P.V. \int_{\mathbb{R}^n} {\dfrac{u(x)-u(y)}{|x-y|^{n+2s}}} ~ dy. $$ 
The term ``P.V." stands for Cauchy's principal value, while $C_{n,s}$ is a normalizing constant whose explicit expression is given in \cite{dipierro2017nonlocal}.
 % $$C_{n,s}= \bigg( \int_{\mathbb{R}^{n}} {\frac{1-cos({\zeta}_{1})}{|\zeta|^{n+2s}}}~d \zeta \bigg)^{-1}.$$ 
In the literature, there are numerous definitions of nonlocal normal derivatives. We consider the one given in \cite{dipierro2017nonlocal} and defined for smooth functions $u$ as
\begin{equation}\label{normal}
\mathcal{N}_{s}u(x)= C_{n,s}\dint_{\Omega} \dfrac{u(x)-u(y)}{|x-y|^{n+2s}}\,dy, \qquad  x\in \mathbb{R}^n\setminus\bar{\Omega}.
\end{equation}
  The eigenvalues are often interpreted as vibrating membrane frequencies, as discussed in \cite{Rayleigh}.
The study of mixed operators of the type $\mathcal{L}$ as in problem \eqref{1} is motivated by several applications where such kinds of operators are naturally generated, including the theory of optimal searching, biomathematics, and animal forging, for which we refer to \cite{dipierro2022non}. It is worth mentioning that the operator $\mathcal{L}$ seems to be of interest in biological applications, see, e.g., \cite{dipierro2021description} and the references therein.
For instance, Dipierro and Valdinoci \cite{dipierro2021description} proposed a model for ecological niches where the population density evolves according to a mixed local-nonlocal operator, combining classical and fractional Laplacians. This model effectively captures the dynamics of species dispersal, accounting for both local random walks and long-range jumps, and is governed by a new type of Neumann condition arising from the superposition of Brownian and Lévy processes. From a purely mathematical perspective, the superposition of such operators introduces a lack of scale invariance, which may lead to unexpected
complications. Ôtani et al. \cite{Otani} studied the first eigenvalue and its properties in detail. Lindgren et al. \cite{Lindgreen} analyzed viscosity solutions and highlighted some distinct behaviors of eigenvalues in the nonlocal setting. Valdinoci et al. \cite{SeR} proved the existence of nontrivial solutions for equations involving nonlocal operators with Dirichlet conditions and explored related spectral properties. A solid framework for nonlocal Neumann problems was developed by Audrito et al. \cite{Audrito2023}, who established boundary regularity and introduced tools for weak formulations in bounded domains. For higher-order cases, Barrios et al. \cite{Barrios2020} proposed variational formulations and integration-by-parts formulas for fractional Laplacians with $s > 1$, making it possible to handle Neumann-type conditions in a variational setting.  In \cite{Mugnai2019}, the authors introduced the notion of Neumann fractional  $p$-derivatives, proved the existence of eigenvalues, and analyzed an associated evolution problem.  
Recent work on mixed local–nonlocal models has added valuable insights to the field. Dipierro et. al in \cite{MR4438596} was one the first among the others who consider mixed operator problems in the presence of classical as well as non-local  Neumann boundary conditions. Their recent article  discusses the spectral properties and the $L^\infty$
bounds associated with a mixed local and nonlocal problem,  in relation to some physical motivations arising from
population dynamics and mathematical biology and also
in \cite{Dipierro2025} authors  examined nonlinear equations involving both local and nonlocal operators with Neumann boundary conditions. Using variational methods like the Mountain Pass and Linking Theorems, they performed a spectral analysis and proved existence results. Moreover, authors \cite{Amundsen, AR, Cowan} studied diffusion problems with mixed operators under Neumann conditions, focusing on the existence and regularity of weak solutions.
Recently, Brezis Nirenberg type existence results for mixed operators under mixed boundary conditions have been studied in \cite{LS}, where the first eigenvalue of the associated linear operator play a crucial role. Our present work, which establishes the spectral framework for such operators, thus provides a foundation for further investigations into semilinear problems of Brezis–Nirenberg type problems.  In a related study, Mukherjee et al. \cite{TMLS} analyzed elliptic problems with mixed operators subject to Dirichlet Neumann boundary conditions, offering new insights into variational formulations and solution behavior.

Recently, authors in \cite{pezzo2019eigenvalues} proved the existence of the first eigenvalue of (the mixed type operators), studied its characteristics, and explored the properties of the related eigenfunction. In \cite{Bs}, the authors studied a class of non-degenerate elliptic operators formed by combining the classical Laplacian with a general nonlocal operator. They established results on existence and uniqueness for Dirichlet boundary value problems, as well as maximum principles and generalized eigenvalue problems. As applications of their analysis, they derived a Faber–Krahn-type inequality and a one-dimensional symmetry result related to Gibbon's conjecture.
%In \cite{RServadei}, authors examined the spectrum of non-local operators and found solutions using the Mountain Pass and Linking theorems for cases where $\lambda < \lambda_1$ and $\lambda > \lambda_1$, with $\lambda_1$ being the first eigenvalue of the non-local operator $\mathcal{L}_k$.

In addition, Maione et al. \cite{Maione} studied the eigenvalues and eigenfunctions of the operator $\mathcal{L}_{\alpha}$, given by $\mathcal{L}_{\alpha} = -\Delta + \alpha (\Delta)^s$, $\alpha \in \mathbb{R}$, under Dirichlet boundary conditions. In particular, the study of principal eigenvalues is essential in the investigation of non-sign changing solutions to semi-linear problems and in local bifurcation  phenomena as well as in stability analysis  (see \cite{berestycki2016definition}).  
 Next, we recall some eigenvalue problems involving mixed boundary conditions. In \cite{1denzler1998bounds, denzler1999windows}, Denzler et al. studied the an eigenvalue problem involving $\Delta$ with mixed Dirichlet and Neumann boundary conditions  
\begin{equation}\label{local}
   \left\{\begin{split}
   -\Delta u &= \lambda_1(\mathcal{D}) u, \quad u > 0 \quad \text{in } \Omega,\\ 
   u &= 0 ~~\text{on}~~ D,\\
   \frac{\partial u}{\partial \nu} &= 0 ~~\text{on}~~ N,
   \end{split}\right.
\end{equation}  
where they examined the behaviour of the eigenvalue $\lambda_1(\mathcal{D})$ as the sets with Dirichlet or Neumann boundary conditions are varied. Leonori et al. \cite{MR3784437} extended this study for nonlocal operator $(-\Delta)^s$ under mixed boundary conditions. Due to the nonlocal nature of their problem, the sets $D$ and $N$ could have infinite measures, which distinguishes them significantly from the local case. Furthermore, motivated by the aforementioned works, we have recently studied eigenvalue problem for $\mathcal L$ under mixed boundary conditions in \cite{JTL}. 
% In addition, Colorado et al.  \cite{colorado2004eigenvalues}   performed the analysis of eigenvalues, bifurcation, and H\"older continuity of solutions with mixed boundary conditions.

Inspired by the work in \cite{SeR}, we aim to study spectral and variational aspects of mixed local–nonlocal operators under mixed boundary conditions, where Dirichlet and Neumann-type conditions are applied on disjoint parts of the boundary. Our approach modifies and generalizes the techniques from \cite{SeR} to handle this more versatile setting. Therefore, beyond extending classical spectral results, our study highlights genuinely new aspects. In contrast to the classical spectral theory, the interplay of local and nonlocal effects under mixed boundary conditions creates novel interactions across the boundary parts, leading to new spectral phenomena and technical difficulties absent in the standard frameworks. This present work extends our results to derive classical properties of the spectrum of mixed operators under mixed
boundary conditions.

Such spectral problems are not only of intrinsic mathematical interest but also arise naturally in various applications. For instance, in nonlinear critical problems, as studied in \cite{LS}, the presence of mixed local–nonlocal operators plays a decisive role in the existence and multiplicity of solutions. Moreover, in models of ecology and population dynamics, mixed dispersal mechanisms combining short range Brownian motion with long range Lévy flights lead precisely to operators of the type considered here. Similarly, in anomalous diffusion phenomena, the interplay between local and nonlocal effects captures realistic transport behaviors that cannot be explained by classical models, we refer to \cite{dipierro2021description, dipierro2022non} and and the references therein.

\vspace{0.5em}
We now state the main results of this paper:

\begin{Theorem}\label{prop3.1}
\begin{itemize}
    \item[(1)]
$\lambda_1$ is the first eigenvalue of \eqref{1} i.e. if $\lambda$ is an eigenvalue of
\eqref{1} then $\lambda\geq\lambda_1$.
\item[(2)] The first eigenvalue of $\mathcal L$ with mixed boundary conditions, as in \eqref{1}, is positive i.e. $\lambda_1>0$.
\item[(3)]
 The first eigenvalue $\lambda_1$ is simple; i.e. if $u\in \mathcal{X}^{1,2}_{\mathcal{D}}(U)$ is a solution of 
    \begin{equation}
    \int_{\Omega} \nabla u\cdot\nabla \varphi \,dx +\int_{Q} \frac{(u(x)-u(y))(\varphi(x)-\varphi(y))}{|x-y|^{n+2s}} dxdy  = \lambda_1\int_{\Omega} u\varphi\,dx,
\end{equation}
for every $\varphi\in \mathcal{X}^{1,2}_{\mathcal{D}}(U),$ then $u= \zeta e_1$, where $e_1$ is an eigenfunction corresponding to eigenvalue $\lambda_1$ and $\zeta\in\R$.
\end{itemize}
\end{Theorem}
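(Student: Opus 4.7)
The natural starting point is to characterize $\lambda_1$ variationally as the minimum of the Rayleigh quotient
\[
\lambda_1 \;=\; \inf_{\substack{u \in \mathcal{X}^{1,2}_{\mathcal{D}}(U) \\ u \not\equiv 0}} \frac{\int_\Omega |\nabla u|^2\,dx + \int_Q \frac{|u(x)-u(y)|^2}{|x-y|^{n+2s}}\,dx\,dy}{\int_\Omega u^2\,dx},
\]
and to observe that the infimum is attained, thanks to the compact embedding $\mathcal{X}^{1,2}_{\mathcal{D}}(U) \hookrightarrow L^2(\Omega)$ presumably established earlier in the paper. Denote the numerator above by $\mathcal{E}(u)$. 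Part~(1) is then immediate: for any eigenpair $(\lambda,u)$, testing the weak formulation of $\mathcal{L}u=\lambda u$ against $u$ itself yields $\lambda = \mathcal{E}(u)/\|u\|_{L^2(\Omega)}^2 \geq \lambda_1$.

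For part~(2), I would argue by contradiction. If $\lambda_1 = 0$, a minimizer $e_1$ normalized by $\|e_1\|_{L^2(\Omega)}=1$ would satisfy simultaneously $\int_\Omega |\nabla e_1|^2\,dx = 0$ and $\int_Q |e_1(x)-e_1(y)|^2/|x-y|^{n+2s}\,dx\,dy = 0$. The first identity forces $e_1$ to be locally constant on $\Omega$, while the second, combined with the Dirichlet condition $e_1\equiv 0$ on $\mathcal{D}$, propagates the zero value through the nonlocal coupling to all of $\Omega\cup\mathcal{N}$; this contradicts the normalization.

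For part~(3) the plan has two ingredients. First, given a $\lambda_1$-eigenfunction $u$, the pointwise identities $\bigl|\nabla |u|\bigr| = |\nabla u|$ a.e.\ and $\bigl||u(x)|-|u(y)|\bigr| \leq |u(x)-u(y)|$ give $\mathcal{E}(|u|)\leq \mathcal{E}(u)$, so $|u|$ is itself a minimizer of the Rayleigh quotient and therefore a $\lambda_1$-eigenfunction. A strong maximum principle tailored to $\mathcal{L}$ under the present mixed boundary data then upgrades $|u|\geq 0$ to $|u|>0$ throughout $\Omega\cup\mathcal{N}$, so every $\lambda_1$-eigenfunction has constant sign. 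Second, if $u$ and $e_1$ are two $\lambda_1$-eigenfunctions, both taken strictly positive by the above, I fix any $x_0\in\Omega$ and set $\zeta := u(x_0)/e_1(x_0)$; by linearity of $\mathcal{L}$, the function $w := u-\zeta e_1$ is again a $\lambda_1$-eigenfunction with $w(x_0)=0$, and the sign-constancy just established forces $w\equiv 0$, i.e.\ $u=\zeta e_1$.

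The main obstacle is the strong maximum principle invoked to pass from $|u|\geq 0$ to $|u|>0$: because $\mathcal{L}$ superposes a local and a nonlocal operator and the boundary data partition $\mathcal{D}$, $\mathcal{N}$, and $\partial\Omega\cap\overline{\mathcal{N}}$ in a nonstandard fashion, the classical touching-point or weak-Harnack arguments need careful adaptation to cover simultaneously the interior of $\Omega$, the nonlocal Neumann part $\mathcal{N}$, and the classical Neumann portion of $\partial\Omega$. A secondary but still nontrivial point is verifying that $|u|$ actually lies in $\mathcal{X}^{1,2}_{\mathcal{D}}(U)$ with the correct trace on $\mathcal{D}$ so that the Rayleigh-quotient comparison $\mathcal{E}(|u|)\leq \mathcal{E}(u)$ is legitimate inside the minimization problem.
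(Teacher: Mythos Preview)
The paper does not actually prove this theorem: its entire proof reads ``The proof is established in Proposition 3.2, Lemma 3.3, and Proposition 3.4 as presented in [JTL]'', deferring everything to a companion article by the same authors. Your outline is therefore more substantive than what the paper supplies, and it follows the expected route---Rayleigh-quotient minimality for (1), nondegeneracy of the energy for (2), and sign-constancy via $|u|$ plus a strong maximum principle for (3). This is almost certainly the same strategy carried out in the cited reference.

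Two minor remarks. For part~(2), a shorter path is already available inside the present paper: the Poincar\'e inequality of Proposition~\ref{Poin} gives $\|u\|_{L^2(\Omega)}^2 \leq C\,\eta(u)^2$, so the Rayleigh quotient is bounded below by $1/C>0$ directly. Your contradiction argument also works, and in fact once $[e_1]_s=0$ you can pair any $x\in\Omega$ with any $y\in\mathcal{D}$ (such a pair $(x,y)$ lies in $Q$ since $x\in\Omega$) to obtain $e_1(x)=e_1(y)=0$ immediately, without routing through $\mathcal{N}$.

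For part~(3), your choice $\zeta = u(x_0)/e_1(x_0)$ tacitly assumes pointwise values of the eigenfunctions at $x_0$; absent a regularity statement, the cleaner variant is to take $\zeta$ so that $\int_\Omega (u-\zeta e_1)\,dx = 0$ (possible since $\int_\Omega e_1>0$), whence $u-\zeta e_1$ is a $\lambda_1$-eigenfunction with zero mean and hence, if nontrivial, must change sign---contradicting the constant-sign property you already established. You are right that the strong maximum principle in this mixed local--nonlocal, mixed-boundary setting is the genuine technical core, and that is presumably exactly what the cited results in [JTL] supply.
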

In the following result below, we provide a characterization of the eigenvalues and eigenfunctions of $\mathcal{L}$ with Dirichlet and Neumann boundary conditions.
\begin{Theorem}\label{p2.9}
The following statements holds true :

\begin{itemize}
    \item[(1)] The operator $\mathcal{L}$ admits a divergent, but bounded from below,  set of eigenvalues of \eqref{dd2.2} consists of a sequence $\{\lambda_k\}_{k \in \mathbb{N}}$ satisfying
    \begin{equation}\label{ineql}
    0 < \lambda_1 < \lambda_2 \leq \cdots \leq \lambda_k \leq \lambda_{k+1} \leq \cdots
        \end{equation}
    such that
    \begin{equation}\label{eql206}
   \lambda_k \to +\infty \quad \text{as } k \to +\infty. 
          \end{equation}
          Furthermore, for any \( k \in \mathbb{N} \), the eigenvalues are characterized as
    \begin{equation}\label{eq204}
    \lambda_{k+1} = \min_{\substack{u \in \mathcal{P}_{k+1} \\ \|u\|_{L^2(\Omega)} = 1}}
    \left\{ \int_\Omega |\nabla u|^2 \, dx +  \int_{Q} 
    \frac{|u(x) - u(y)|^2}{|x - y|^{n+2s}} \, dx \, dy \right\} 
        \end{equation}
    or equivalently,
    \begin{equation}\label{eq206}
    \lambda_{k+1} = \min_{u \in \mathcal{P}_{k+1} \setminus \{0\}} 
    \frac{ \int_{\Omega}|\nabla{u}|^2\,dx+ \int_{Q} \frac{|u(x)-u(y)|^{2}}{|x-y|^{n+2s}} \, dx dy}{\int_\Omega |u(x)|^2 \, dx}, 
        \end{equation}
    where
    \begin{equation}\label{eq207}
   \mathcal{P}_{k+1}= \left\{ u \in\mathcal{X}^{1,2}_{\mathcal{D}}(U) : \langle u, e_j \rangle_{\mathcal{X}^{1,2}_{\mathcal{D}}(U)} = 0, \quad \forall~ j = 1, \ldots, k \right\} 
         \end{equation}
    and $e_j$ is the eigenfunction corresponding to eigenvalue $\lambda_j.$
    \item[(2)] For any $k \in \mathbb{N}$, there exists an eigenfunction $e_{k+1} \in \mathcal{P}_{k+1}$ corresponding to eigenvalue  $\lambda_{k+1}$, attaining the minimum in \eqref{eq204}; i.e., $\|e_{k+1}\|_{L^2(\Omega)} = 1$ and
    \begin{equation}\label{eq2011}
  \lambda_{k+1}= \int_\Omega |\nabla e_{k+1}|^2 \, dx +  \int_{Q} 
    \frac{|e_{k+1}(x) - e_{k+1}(y)|^2}{|x - y|^{n+2s}} \, dx \, dy . 
          \end{equation}
\end{itemize}
    \end{Theorem}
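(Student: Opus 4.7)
The plan is to construct the sequence $\{(\lambda_k,e_k)\}_{k\ge 1}$ inductively by a constrained minimization of the symmetric bilinear form
$$a(u,v) := \int_{\Omega} \nabla u \cdot \nabla v\, dx + \int_{Q} \frac{(u(x)-u(y))(v(x)-v(y))}{|x-y|^{n+2s}}\, dx\, dy$$
on nested weakly closed subspaces of $\mathcal{X}^{1,2}_{\mathcal{D}}(U)$, then to exploit $L^2(\Omega)$-orthonormality together with the compact embedding $\mathcal{X}^{1,2}_{\mathcal{D}}(U)\hookrightarrow L^2(\Omega)$ to force $\lambda_k\to\infty$. The base case $k=0$, giving $\lambda_1>0$ and $e_1$, is exactly Theorem~\ref{prop3.1}.

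For the inductive step, assume eigenpairs $(\lambda_j,e_j)$, $j=1,\dots,k$, have been produced with $\|e_j\|_{L^2(\Omega)}=1$ and pairwise orthogonal in $\mathcal{X}^{1,2}_{\mathcal{D}}(U)$. Because each $e_j$ is an eigenfunction, the identity $\langle u,e_j\rangle_{\mathcal{X}^{1,2}_{\mathcal{D}}(U)} = a(u,e_j) = \lambda_j\int_\Omega u\,e_j\,dx$ shows this orthogonality is equivalent to $L^2(\Omega)$-orthogonality on $e_j$. Define $\mathcal{P}_{k+1}$ as in \eqref{eq207} and set
$$\lambda_{k+1} := \inf\bigl\{\, a(u,u) : u\in \mathcal{P}_{k+1},\ \|u\|_{L^2(\Omega)}=1\,\bigr\}.$$
Since $\mathcal{P}_{k+1}\subset\mathcal{P}_k$, automatically $\lambda_{k+1}\ge\lambda_k$. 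Taking a minimizing sequence $(u_n)$, coercivity of $a$ on $\mathcal{X}^{1,2}_{\mathcal{D}}(U)$ (which follows from $\lambda_1>0$ and the norm structure of the space) yields a uniform bound; up to a subsequence $u_n\weakly e_{k+1}$ weakly in $\mathcal{X}^{1,2}_{\mathcal{D}}(U)$ and, by the compact embedding, $u_n\to e_{k+1}$ strongly in $L^2(\Omega)$. Hence $\|e_{k+1}\|_{L^2(\Omega)}=1$, weak closedness of $\mathcal{P}_{k+1}$ gives $e_{k+1}\in\mathcal{P}_{k+1}$, and weak lower semicontinuity of $u\mapsto a(u,u)$ yields the equality \eqref{eq2011}. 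The equivalence of \eqref{eq204} and \eqref{eq206} is immediate by the homogeneity $u\mapsto u/\|u\|_{L^2(\Omega)}$.

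To promote $e_{k+1}$ to an actual eigenfunction, I apply the Lagrange multiplier rule with the $k+1$ constraints $\|u\|_{L^2(\Omega)}^2=1$ and $\langle u,e_j\rangle_{\mathcal{X}^{1,2}_{\mathcal{D}}(U)}=0$, $j=1,\dots,k$. This yields scalars $\mu_0,\dots,\mu_k$ with
$$a(e_{k+1},\varphi) = \mu_0\int_\Omega e_{k+1}\,\varphi\, dx + \sum_{j=1}^{k} \mu_j\,\langle e_j,\varphi\rangle_{\mathcal{X}^{1,2}_{\mathcal{D}}(U)} \quad \forall\, \varphi\in\mathcal{X}^{1,2}_{\mathcal{D}}(U).$$
Testing $\varphi=e_{k+1}$ gives $\mu_0=\lambda_{k+1}$; testing $\varphi=e_\ell$ for $\ell\le k$ and using the eigenvalue equation for $e_\ell$ together with $e_{k+1}\in\mathcal{P}_{k+1}$ forces $\mu_\ell\,\|e_\ell\|_{\mathcal{X}^{1,2}_{\mathcal{D}}(U)}^2=0$, so $\mu_\ell=0$. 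Therefore $e_{k+1}$ solves \eqref{1} with $\lambda=\lambda_{k+1}$.

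Finally, for the divergence \eqref{eql206} I argue by contradiction: boundedness of $(\lambda_k)$ would give a bounded sequence $(e_k)$ in $\mathcal{X}^{1,2}_{\mathcal{D}}(U)$, hence a subsequence convergent in $L^2(\Omega)$, contradicting $\|e_k-e_j\|_{L^2(\Omega)}^2=2$ for $k\ne j$. The strict inequality $\lambda_1<\lambda_2$ is forced by the simplicity statement of Theorem~\ref{prop3.1}: every $\lambda_1$-eigenfunction is a scalar multiple of $e_1$, and such multiples cannot lie in $\mathcal{P}_2$. The main obstacle I anticipate is the rigorous use of the compact embedding $\mathcal{X}^{1,2}_{\mathcal{D}}(U)\hookrightarrow L^2(\Omega)$ in this mixed-boundary setting, since it drives both the attainment of each minimum and the divergence of the spectrum; a secondary delicacy is the Lagrange multiplier step, which relies on identifying $\langle\cdot,\cdot\rangle_{\mathcal{X}^{1,2}_{\mathcal{D}}(U)}$ with the form $a$ and on the inductive property that every previously-constructed $e_j$ is a genuine eigenfunction under the nonlocal and classical Neumann conditions.
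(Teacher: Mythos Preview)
Your inductive construction via constrained minimization on $\mathcal{P}_{k+1}$, followed by weak compactness and lower semicontinuity, mirrors the paper's approach almost exactly; the paper packages the attainment and Euler--Lagrange steps into a preliminary lemma (applied with $\mathcal{C}=\mathcal{P}_{k+1}$), but the content is the same. Where you invoke a Lagrange multiplier rule with $k+1$ constraints to promote $e_{k+1}$ to a global eigenfunction, the paper instead first derives the weak equation for test functions $\varphi\in\mathcal{P}_{k+1}$ via a one-parameter perturbation $\hat u+\varepsilon\varphi$ (no multipliers), and then extends to arbitrary $\varphi\in\mathcal{X}^{1,2}_{\mathcal{D}}(U)$ by decomposing $\varphi=\varphi_1+\varphi_2$ with $\varphi_1\in\operatorname{span}\{e_1,\dots,e_k\}$ and using the inductive hypothesis on the $e_i$. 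Both routes are correct and of comparable difficulty; your Lagrange argument is slightly slicker once the independence of the constraint gradients is checked, while the paper's decomposition is more elementary and self-contained.

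There is, however, a genuine omission. Assertion~(1) asserts that the \emph{set} of eigenvalues of \eqref{dd2.2} consists of the sequence $\{\lambda_k\}$, i.e., that the construction exhausts the spectrum. Your proposal produces a nondecreasing divergent sequence of eigenvalues but never shows that every eigenvalue of the problem appears in it. The paper closes this gap by contradiction: if $\lambda$ were an eigenvalue not in the list, then by \eqref{eql206} one has $\lambda_k<\lambda<\lambda_{k+1}$ for some $k$; the corresponding normalized eigenfunction $e$ cannot lie in $\mathcal{P}_{k+1}$ (else $2\mathcal{J}(e)=\lambda\ge\lambda_{k+1}$), so $\langle e,e_i\rangle_{\mathcal{X}^{1,2}_{\mathcal{D}}(U)}\neq 0$ for some $i\le k$, contradicting the orthogonality of eigenfunctions associated with distinct eigenvalues. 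Without this step your characterizations \eqref{eq204}--\eqref{eq206} describe \emph{some} eigenvalues, not the full spectrum as the theorem claims.
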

     \begin{Theorem}\label{p210}
      The sequence \( \{e_k\}_{k \in \mathbb{N}} \) of eigenfunctions associated with \( \lambda_k \) forms an orthonormal basis for \( L^2(\Omega) \) and an orthogonal basis for \( \mathcal{X}^{1,2}_{\mathcal{D}}(U) \).
 \end{Theorem}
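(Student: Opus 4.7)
My plan is to split the proof into two parts: first verify the orthogonality relations, and then establish density of $\mathrm{span}\{e_k\}$ in each of the two spaces. The key technical device throughout is the interplay between the two inner products: by the weak form of \eqref{1}, every eigenfunction satisfies
\[
\langle e_k,\varphi\rangle_{\mathcal{X}^{1,2}_{\mathcal{D}}(U)} \;=\; \lambda_k\int_\Omega e_k\varphi\,dx\qquad\forall\,\varphi\in\mathcal{X}^{1,2}_{\mathcal{D}}(U),
\]
and the divergence $\lambda_k\to+\infty$ from Theorem \ref{p2.9} together with the variational characterization \eqref{eq206} plays the role of a Bessel-type tail estimate.

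For orthogonality, since $e_{k+1}\in\mathcal{P}_{k+1}$ by Theorem \ref{p2.9}(2), we have $\langle e_{k+1},e_j\rangle_{\mathcal{X}^{1,2}_{\mathcal{D}}(U)}=0$ for $j\le k$; plugging $\varphi=e_j$ into the identity above yields $\lambda_j\int_\Omega e_{k+1}e_j\,dx=0$, and $\lambda_j>0$ from Theorem \ref{prop3.1} gives $L^2(\Omega)$-orthogonality, which combined with $\|e_k\|_{L^2(\Omega)}=1$ delivers orthonormality in $L^2(\Omega)$. Symmetrically, $\langle e_i,e_j\rangle_{\mathcal{X}^{1,2}_{\mathcal{D}}(U)}=\lambda_j\delta_{ij}$, so $\{e_k\}$ is $\mathcal{X}^{1,2}_{\mathcal{D}}(U)$-orthogonal with $\|e_k\|^{2}_{\mathcal{X}^{1,2}_{\mathcal{D}}(U)}=\lambda_k$.

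For completeness in $L^2(\Omega)$, for $v\in\mathcal{X}^{1,2}_{\mathcal{D}}(U)$ I set $v_N=\sum_{k=1}^N\langle v,e_k\rangle_{L^2(\Omega)}e_k$. The identity above shows these Fourier sums coincide with the $\mathcal{X}^{1,2}_{\mathcal{D}}(U)$-orthogonal projections of $v$ onto $\mathrm{span}\{e_1,\dots,e_N\}$, so $v-v_N\in\mathcal{P}_{N+1}$, and the Pythagorean identity in $\mathcal{X}^{1,2}_{\mathcal{D}}(U)$ yields $\|v-v_N\|^{2}_{\mathcal{X}^{1,2}_{\mathcal{D}}(U)}\le\|v\|^{2}_{\mathcal{X}^{1,2}_{\mathcal{D}}(U)}$. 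Applying \eqref{eq206} to $v-v_N$ in the nontrivial case gives
\[
\|v-v_N\|^{2}_{L^2(\Omega)}\;\le\;\frac{\|v-v_N\|^{2}_{\mathcal{X}^{1,2}_{\mathcal{D}}(U)}}{\lambda_{N+1}}\;\le\;\frac{\|v\|^{2}_{\mathcal{X}^{1,2}_{\mathcal{D}}(U)}}{\lambda_{N+1}}\;\longrightarrow 0.
\]
The passage to arbitrary $f\in L^2(\Omega)$ is then routine from the density of $C_c^\infty(\Omega)\subset\mathcal{X}^{1,2}_{\mathcal{D}}(U)$ in $L^2(\Omega)$ and the standard Bessel bound $\sum_k|\langle f,e_k\rangle_{L^2(\Omega)}|^2\le\|f\|^{2}_{L^2(\Omega)}$.

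For completeness in $\mathcal{X}^{1,2}_{\mathcal{D}}(U)$, the same partial sums satisfy $\|v_M-v_N\|^{2}_{\mathcal{X}^{1,2}_{\mathcal{D}}(U)}=\sum_{k=N+1}^M\lambda_k|\langle v,e_k\rangle_{L^2(\Omega)}|^2$, and Bessel in $\mathcal{X}^{1,2}_{\mathcal{D}}(U)$ gives $\sum_k\lambda_k|\langle v,e_k\rangle_{L^2(\Omega)}|^2\le\|v\|^{2}_{\mathcal{X}^{1,2}_{\mathcal{D}}(U)}$, so $v_N$ is Cauchy in $\mathcal{X}^{1,2}_{\mathcal{D}}(U)$ and converges to some $\tilde v$; the continuous embedding into $L^2(\Omega)$ and the previous step then force $\tilde v=v$ in $L^2(\Omega)$. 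The hardest point I anticipate is upgrading this $L^2(\Omega)$-equality to equality in $\mathcal{X}^{1,2}_{\mathcal{D}}(U)$, since an element supported on $\mathcal{N}$ could a priori have nonzero Gagliardo seminorm while vanishing in $\Omega$. I plan to close this by applying \eqref{eq206} to $w=v-\tilde v$: by construction $\langle w,e_k\rangle_{\mathcal{X}^{1,2}_{\mathcal{D}}(U)}=0$ for all $k$, so $w\in\mathcal{P}_{k+1}$ for every $k$; if $\|w\|_{L^2(\Omega)}>0$ the quotient in \eqref{eq206} would majorize every $\lambda_{k+1}$, contradicting $\lambda_k\to+\infty$, and hence $w=0$ on $\Omega$. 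Testing the weak form of the eigenvalue equation with $\varphi=w$ and invoking the nonlocal Neumann constraint $\mathcal{N}_s u=0$ encoded in the bilinear form then forces the Gagliardo seminorm of $w$ to vanish as well, yielding $w=0$ in $\mathcal{X}^{1,2}_{\mathcal{D}}(U)$ and completing the proof.
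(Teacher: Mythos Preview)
Your treatment of orthogonality and of $L^2(\Omega)$--completeness is correct; in fact your direct estimate
\[
\|v-v_N\|_{L^2(\Omega)}^{2}\le\frac{\eta(v-v_N)^{2}}{\lambda_{N+1}}\le\frac{\eta(v)^{2}}{\lambda_{N+1}}
\]
for $v\in\mathcal{X}^{1,2}_{\mathcal{D}}(U)$ is more economical than the paper's route, which first proves the $\mathcal{X}^{1,2}_{\mathcal{D}}(U)$--basis property and only afterwards deduces the $L^2$ statement via density and the embedding.

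The genuine gap is in the closing step of your $\mathcal{X}^{1,2}_{\mathcal{D}}(U)$--completeness argument, and you are right to flag it as the delicate point. You reach $w=v-\tilde v$ with $w=0$ on $\Omega$ and $\langle w,e_k\rangle_{\mathcal{X}^{1,2}_{\mathcal{D}}(U)}=0$ for all $k$, and then propose to ``test the weak form of the eigenvalue equation with $\varphi=w$'' and invoke the Neumann constraint to force $[w]_s=0$. This does not give any new information: inserting $\varphi=w$ into \eqref{dd2.2} for $e_k$ merely returns $\langle e_k,w\rangle_{\mathcal{X}^{1,2}_{\mathcal{D}}(U)}=\lambda_k\int_\Omega e_k w\,dx=0$, which you already know, and the condition $\mathcal{N}_s e_k=0$ constrains the eigenfunction, not the test function $w$. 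The obstruction is in fact structural. Take any $0\not\equiv w\in C^\infty_0(\mathcal{N})\subset\mathcal{X}^{1,2}_{\mathcal{D}}(U)$; then $w=0$ on $\Omega$ and, splitting $Q$, one computes
\[
\langle e_k,w\rangle_{\mathcal{X}^{1,2}_{\mathcal{D}}(U)}
=\frac{2}{C_{n,s}}\int_{\mathcal{N}}w(y)\,\mathcal{N}_s e_k(y)\,dy=0,
\qquad
\eta(w)^{2}=2\int_{\mathcal{N}}|w(y)|^{2}\Bigl(\int_\Omega\frac{dx}{|x-y|^{n+2s}}\Bigr)dy>0.
\]
Thus $w$ is $\mathcal{X}^{1,2}_{\mathcal{D}}(U)$--orthogonal to every $e_k$ while $\eta(w)>0$, so $\mathrm{span}\{e_k\}$ misses the entire subspace $\{u\in\mathcal{X}^{1,2}_{\mathcal{D}}(U):u|_\Omega=0\}$, which is nontrivial whenever $\mathcal{N}\neq\emptyset$. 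Your proposed fix cannot close this, and the paper's own proof of \eqref{eq340} glosses over the same point: it normalises the obstruction to $\|w\|_{L^2(\Omega)}=1$ without first excluding the case $\|w\|_{L^2(\Omega)}=0$.
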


The rest of the paper is organized as follows:
In Section 2, we introduce the functional framework and some preliminaries results.
Section 3 is devoted to the proof of fundamental spectral properties of the mixed operator, including the existence and characterization of eigenvalues and eigenfunctions.
\section{Functional framework and main results }
In this section, we set our notations and formulated the functional framework for \eqref{1}, to be used throughout the paper.
For every $s\in (0,1)$, we recall the fractional Sobolev spaces,
$${H^{s}(\mathbb{R}^n)} =  \Bigg\{ u \in L^{2}(\mathbb{R}^n):~~\frac{|u(x) - u(y)|}{|x - y|^{\frac{n}{2} + s}} \in L^{2}({\mathbb{R}^n}\times {\mathbb{R}^n)} \Bigg\} $$ which contain $H^1(\mathbb R^n)$. We assume that $\Omega \cup \mathcal N$ is bounded with a smooth boundary. 
The symbol $U$  is used throughout the article instead of $(\Omega \cup {\mathcal{N}} \cup (\partial\Omega\cap\overline{\mathcal{N}}))$ for sake of clarity.
We define the function space $\mathcal{X}^{1,2}_{\mathcal{D}}(U)$ as 
\begin{align*}
    \mathcal{X}^{1,2}_{\mathcal{D}}(U)  = \{u\in H^1(\mathbb{R}^n) : ~u|_{U} \in H^1_0(U) ~\text{and}~ u \equiv 0~ a.e. ~\text{in}~ {U^c}\}.
\end{align*}
Let us define 
 $$ \eta(u)^2 =  ||\nabla{u}||^2_{L^{2}(\Omega)}+ [u]^2_{s},$$
for $u\in\mathcal{X}^{1,2}_{\mathcal{D}}(U)$, where  $[u]_s$ is the Gagliardo seminorm of $u$ defined by 
 $$[u]^2_{s} = ~ \bigg(\int_{Q} \frac{|u(x)-u(y)|^{2}}{|x-y|^{n+2s}} \, dx dy \bigg)$$ and $Q= \mathbb R^{2n}\setminus (\Omega^c\times \Omega^c)$.
The following Poincar\'e type inequality can be established following the arguments of Proposition 2.4 in \cite{MR4065090} and taking advantage of partial Dirichlet boundary conditions in $U^c$.
\begin{Proposition}\label{Poin} (Poincar\'e type inequality) There exists a constant $C=C(\Omega, n,s)>0$ such that
$$
\dint_{\Omega}| u|^2\,dx\leq C\bigg(\int_{\Omega} |\nabla u|^2\,dx+  \int_{Q} \dfrac{|u(x)-u(y)|^2}{|x-y|^{n+2s}}\,dxdy\bigg),
$$
for every  $u\in\mathcal{X}^{1,2}_{\mathcal{D}}(U)$, i.e. $\|u\|^2_{L^2(\Omega)}\leq C \eta(u)^2$. 
\end{Proposition}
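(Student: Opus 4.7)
The plan is to argue by contradiction combined with a compactness argument, in the spirit of Proposition~2.4 of \cite{MR4065090}. Suppose the stated inequality fails. Then for each $k\in\mathbb{N}$ one can pick $u_k\in\mathcal{X}^{1,2}_{\mathcal{D}}(U)$ with $\|u_k\|_{L^2(\Omega)}=1$ and $\eta(u_k)^2<1/k$; in particular $\|\nabla u_k\|_{L^2(\Omega)}\to 0$ and $[u_k]_s\to 0$ as $k\to\infty$.

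Next I would extract a subsequential limit inside $\Omega$. Since $[u_k]_s^2$ dominates the restricted Gagliardo seminorm over $\Omega\times\Omega$ and $\|u_k\|_{L^2(\Omega)}=1$, the sequence $\{u_k\}$ is bounded in $H^s(\Omega)$. Because $\Omega$ is bounded with $C^1$ boundary, the fractional Rellich--Kondrachov theorem yields a subsequence, still denoted $u_k$, converging strongly in $L^2(\Omega)$ and pointwise a.e.\ in $\Omega$ to some $u$ with $\|u\|_{L^2(\Omega)}=1$. Moreover, $\nabla u_k\to 0$ in $L^2(\Omega)$, combined with the weak convergence of the gradients, forces $u\in H^1(\Omega)$ with $\nabla u\equiv 0$, so $u$ is constant on each connected component of $\Omega$.

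The decisive remaining step is to exploit the Dirichlet vanishing $u_k\equiv 0$ on $U^{c}\supseteq\mathcal{D}$ in order to conclude $u\equiv 0$ in $\Omega$. For every $x\in\Omega$ and $y\in\mathcal{D}$ the pair $(x,y)$ belongs to $Q$, so
\begin{equation*}
\int_\Omega |u_k(x)|^2\,\kappa(x)\,dx \;=\; \int_\Omega\int_{\mathcal{D}}\frac{|u_k(x)-u_k(y)|^2}{|x-y|^{n+2s}}\,dy\,dx \;\leq\; [u_k]_s^2 \;\longrightarrow\; 0,
\end{equation*}
where $\kappa(x):=\int_{\mathcal{D}}|x-y|^{-(n+2s)}\,dy$. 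Since $\mathcal{D}$ is open (hence of positive Lebesgue measure) and every $x\in\Omega$ has strictly positive distance from $\mathcal{D}\subset\mathbb{R}^n\setminus\overline{\Omega}$, one has $0<\kappa(x)<\infty$ for a.e.\ $x\in\Omega$. Combining the a.e.\ convergence $u_k\to u$ with Fatou's lemma yields $\int_\Omega|u(x)|^2\kappa(x)\,dx=0$, whence $u\equiv 0$ in $\Omega$, contradicting $\|u\|_{L^2(\Omega)}=1$.

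The main obstacle is extracting pointwise control of the limit $u$ on $\Omega$ from a seminorm defined over the full set $Q$. Pairing $x\in\Omega$ with $y\in\mathcal{D}$ collapses $|u_k(x)-u_k(y)|^2$ to $|u_k(x)|^2$ and absorbs the singular kernel into the strictly positive weight $\kappa$; this is exactly what closes the contradiction. Notice that if $|\mathcal{D}|=0$, non-zero constants would satisfy $\eta(u)=0$ while having nonzero $L^2(\Omega)$ norm, and the inequality would fail; hence the positivity of $|\mathcal{D}|$, implicit in the standing geometric hypotheses, is indispensable.
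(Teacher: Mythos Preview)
Your argument is correct and matches what the paper indicates: the paper does not give an explicit proof here but merely states that the inequality ``can be established following the arguments of Proposition~2.4 in \cite{MR4065090} and taking advantage of partial Dirichlet boundary conditions in $U^c$.'' Your contradiction--compactness proof is precisely in that spirit, and your decisive Step~4 (pairing $x\in\Omega$ with $y\in\mathcal{D}$ so that $u_k(y)=0$ collapses the integrand to $|u_k(x)|^2\kappa(x)$ with $\kappa>0$) is exactly the ``partial Dirichlet'' ingredient the paper alludes to. One cosmetic remark: the finiteness of $\kappa(x)$ is not actually needed for the Fatou step, only its strict positivity; and Step~3 (constancy of $u$ on components) is superfluous since Step~4 yields $u\equiv 0$ directly.
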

As a consequence of Proposition \ref{Poin}, $\eta(\cdot)$ forms a norm on $\mathcal{X}^{1,2}_{\mathcal{D}}(U)$ and $\mathcal{X}^{1,2}_{\mathcal{D}}(U)$ is a Hilbert space with the  inner product associated with $\eta(\cdot)$, defined for any $u,v\in \mathcal{X}^{1,2}_{\mathcal{D}}(U)$  by
\begin{equation}\label{eq201}
    {\langle{ u},{ v}\rangle}_{\mathcal{X}^{1,2}_{\mathcal{D}}(U)} = \int_{\Omega} \nabla u. \nabla{v} \,dx + \int_{Q} {\dfrac{(u(x)-u(y)) (v(x)-v(y))}{|x-y|^{n+2s}}} ~ dx dy.
    \end{equation} 

Consequently, we have the integration by-parts formula given in the following proposition.
 \begin{Proposition}\label{P}
 For every $ u,v\in  C^\infty_0(U)$, it holds
\begin{align*}
    \int_{\Omega}v \mathcal{L} u \,dx  
    &= \int_{\Omega} \nabla u \cdot\nabla{v} \,dx +  \int_{Q} {\dfrac{(u(x)-u(y)) (v(x)-v(y))}{|x-y|^{n+2s}}} ~ dx dy-  \int_{\partial \Omega\cap\overline{{\mathcal{N}}}} v {\frac{\partial u}{\partial \nu}}~ d{\sigma}-  \int_{{\mathcal{N}}} v {\mathcal{N}}_s u~ dx.
    \end{align*}
    where $\nu$ denotes the outward normal on $\partial\Omega$.
    \end{Proposition}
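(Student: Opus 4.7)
The strategy is to split the mixed operator as $\mathcal{L} = -\Delta + (-\Delta)^s$ and prove the identity for each piece separately, then sum them. Since $u,v\in C_0^\infty(U)$ are smooth and compactly supported in $U$, all integrals converge absolutely and no approximation arguments are needed. The key structural observation to exploit throughout is that $v$ vanishes on $U^c=\overline{\mathcal{D}}\setminus(\partial\Omega\cap\overline{\mathcal{N}})$; in particular, $v\equiv 0$ on $\mathcal{D}$ and $v\equiv 0$ on $\partial\Omega\cap\overline{\mathcal{D}}$.

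For the local part, I would apply the classical Green's identity on $\Omega$, whose boundary is of class $C^1$:
\begin{equation*}
\int_\Omega v(-\Delta u)\,dx = \int_\Omega \nabla u\cdot\nabla v\,dx - \int_{\partial\Omega} v\,\frac{\partial u}{\partial\nu}\,d\sigma.
\end{equation*}
Since $\partial\Omega = (\partial\Omega\cap\overline{\mathcal{D}})\cup(\partial\Omega\cap\overline{\mathcal{N}})$ and $v$ vanishes on the Dirichlet portion $\partial\Omega\cap\overline{\mathcal{D}}$, the surface integral collapses to an integral over $\partial\Omega\cap\overline{\mathcal{N}}$ only, which is exactly the boundary term appearing in the proposition.

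For the nonlocal part, I would invoke the integration-by-parts formula of Dipierro--Ros-Oton--Valdinoci \cite{dipierro2017nonlocal}, which, for smooth compactly supported functions, reads
\begin{equation*}
\int_\Omega v(-\Delta)^s u\,dx = \int_{Q}\frac{(u(x)-u(y))(v(x)-v(y))}{|x-y|^{n+2s}}\,dx\,dy - \int_{\mathbb{R}^n\setminus\overline{\Omega}} v\,\mathcal{N}_s u\,dx,
\end{equation*}
with $Q=\mathbb{R}^{2n}\setminus(\Omega^c\times\Omega^c)$ and the normalization conventions fixed in the preliminaries. Writing $\mathbb{R}^n\setminus\overline{\Omega} = \mathcal{D}\cup\mathcal{N}$ (up to a null set, by the assumption $\overline{\mathcal{D}\cup\mathcal{N}} = \mathbb{R}^n\setminus\Omega$ and $\mathcal{D}\cap\mathcal{N}=\emptyset$) and using that $v\equiv 0$ on $\mathcal{D}$, the exterior integral reduces to $\int_{\mathcal{N}} v\,\mathcal{N}_s u\,dx$. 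Adding the two identities gives the stated formula.

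The main subtlety is not the calculus itself but the bookkeeping of where $v$ vanishes: one must argue cleanly that $C_0^\infty(U)$ forces $v=0$ both on the portion $\partial\Omega\cap\overline{\mathcal{D}}$ of the classical boundary and on the exterior Dirichlet region $\mathcal{D}$, so that the unwanted terms drop out and only the Neumann-type contributions $\int_{\partial\Omega\cap\overline{\mathcal{N}}} v\,\partial_\nu u\,d\sigma$ and $\int_{\mathcal{N}} v\,\mathcal{N}_s u\,dx$ survive. Once this geometric fact is correctly invoked, the proof reduces to assembling the classical and fractional Green formulas.
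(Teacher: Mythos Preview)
Your proposal is correct and follows essentially the same approach as the paper's proof: the paper's argument is a one-sentence sketch that invokes the classical integration-by-parts formula together with the vanishing of $u,v$ on $U^c=\mathcal{D}\cup(\partial\Omega\cap\overline{\mathcal{D}})$, and appeals to \cite[Lemma~3.3]{dipierro2017nonlocal} for the nonlocal part. Your write-up simply spells this out in more detail, making explicit the splitting $\mathcal{L}=-\Delta+(-\Delta)^s$, the use of Green's identity for the local piece, and the Dipierro--Ros-Oton--Valdinoci formula for the fractional piece, with the same bookkeeping on where $v$ vanishes.
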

    \begin{proof}
        By directly using the integration by parts formula and the fact that $u,v \equiv 0$ a.e. in $\mathcal{D}\cup(\partial\Omega\cap\overline{\mathcal{D}})=U^c$, we can follow [\cite{dipierro2017nonlocal}, Lemma 3.3 ], to obtain the conclusion.
    \end{proof}
  \begin{Corollary}
Since $ C^\infty_0(U)$ is dense in $\mathcal{X}^{1,2}_{\mathcal{D}}(U)$, so Proposition \ref{P} still holds for functions in $\mathcal{X}^{1,2}_{\mathcal{D}}(U)$.   
\end{Corollary}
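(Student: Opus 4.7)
The argument is a direct density extension of Proposition~\ref{P}. I would fix arbitrary $u,v \in \mathcal{X}^{1,2}_{\mathcal{D}}(U)$ and, using the stated density of $C^\infty_0(U)$ in $\mathcal{X}^{1,2}_{\mathcal{D}}(U)$, choose approximating sequences $\{u_k\},\{v_k\}\subset C^\infty_0(U)$ with $\eta(u_k-u)\to 0$ and $\eta(v_k-v)\to 0$. Proposition~\ref{P} applies to each pair $(u_k,v_k)$ and produces the integration-by-parts identity. The goal is then to pass to the limit term by term.

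The two bilinear bulk integrals on the right-hand side of the identity --- the gradient pairing on $\Omega$ and the Gagliardo pairing on $Q$ --- together form the inner product $\langle \cdot,\cdot\rangle_{\mathcal{X}^{1,2}_{\mathcal{D}}(U)}$ from \eqref{eq201}, which by construction is continuous with respect to the norm $\eta$. By Cauchy--Schwarz and bilinearity,
$$
\bigl|\langle u_k,v_k\rangle_{\mathcal{X}^{1,2}_{\mathcal{D}}(U)}-\langle u,v\rangle_{\mathcal{X}^{1,2}_{\mathcal{D}}(U)}\bigr|
\;\leq\; \eta(u_k-u)\,\eta(v_k) + \eta(u)\,\eta(v_k-v)\;\longrightarrow\;0,
$$
since $\eta(v_k)\to\eta(v)$ and is therefore bounded. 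Hence the right-hand side of Proposition~\ref{P} passes to the limit without further work.

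The main obstacle is the left-hand side: for $u$ merely in $\mathcal{X}^{1,2}_{\mathcal{D}}(U)$, none of the three terms $\int_\Omega v\,\mathcal{L}u\,dx$, $\int_{\partial\Omega\cap\overline{\mathcal{N}}}v\,\tfrac{\partial u}{\partial\nu}\,d\sigma$, and $\int_\mathcal{N} v\,\mathcal{N}_s u\,dx$ is defined pointwise in general. The remedy is to reinterpret each as a duality pairing in the relevant dual space: $\mathcal{L}u \in (\mathcal{X}^{1,2}_{\mathcal{D}}(U))^*$, the classical normal derivative as an element of $H^{-1/2}(\partial\Omega\cap\overline{\mathcal{N}})$ acting on the trace of $v$, and $\mathcal{N}_s u$ via the associated fractional trace space. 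Equivalently, one may take the identity of Proposition~\ref{P} itself as the definition of the sum on the left for general $u,v\in\mathcal{X}^{1,2}_{\mathcal{D}}(U)$. With this convention fixed, the smooth identities give a Cauchy sequence on both sides that converge to the same limit, which provides the extension claimed in the corollary.
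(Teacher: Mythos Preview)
The paper gives no proof for this corollary at all --- it is simply asserted as an immediate consequence of the density of $C^\infty_0(U)$ in $\mathcal{X}^{1,2}_{\mathcal{D}}(U)$. Your density argument is exactly the standard way to justify such an extension and is the implicit content of the paper's one-line claim; the convergence of the two bulk bilinear terms via the continuity of $\langle\cdot,\cdot\rangle_{\mathcal{X}^{1,2}_{\mathcal{D}}(U)}$ is handled correctly. You actually go further than the paper by flagging a genuine subtlety it ignores: for general $u\in\mathcal{X}^{1,2}_{\mathcal{D}}(U)$ the quantities $\mathcal{L}u$, $\partial u/\partial\nu$, and $\mathcal{N}_s u$ are not defined pointwise, so the corresponding integrals must be read as duality pairings (or the identity itself taken as their definition). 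This is the honest way to make the corollary precise, and your treatment is more careful than the paper's.
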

We now define the notion of weak solution to \eqref{1}, which consists of the below eigenvalue problem \eqref{dd2.2}.
\begin{Definition}\label{d1}
We say that $u\in \mathcal{X}^{1,2}_{\mathcal{D}}(U)$ is a weak solution to \eqref{1} if 
\begin{equation}\label{dd2.2}
    \int_{\Omega} \nabla u\cdot\nabla \varphi \,dx +\int_{Q} \frac{(u(x)-u(y))(\varphi(x)-\varphi(y))}{|x-y|^{n+2s}} dxdy  = \lambda\int_{\Omega} u\varphi\,dx,  \forall \varphi \in \mathcal{X}^{1,2}_{\mathcal{D}}(U).
\end{equation}

\end{Definition}

Consequently to $\mathcal{X}^{1,2}_{\mathcal{D}}(U)\hookrightarrow H^1(\mathbb R^n)$ and Sobolev embeddings, we infer the following embedding result.

\begin{remark}\label{r2.5}
   For $U$ is bounded (since $\Omega\cup\mathcal{N}$ is bounded) with smooth boundary, then we have compact embedding
    $$\mathcal{X}^{1,2}_{\mathcal{D}}(U)\hookrightarrow \hookrightarrow L^q_{loc}(\R^n)$$
   for $q\in [1,2^*)$ and continuous embedding for $q\in[1, 2^*].$
\end{remark} 
\section{Fundamental properties of eigenvalues of $\mathcal{L}$}
In this section, we aim to establish several fundamental properties of the eigenvalues associated with \(\mathcal{L}\) in \eqref{1}.
Recalling $U$ is bounded and Proposition \ref{Poin}, we define $\lambda_1$ as 
\begin{equation*}
\lambda_1=\inf_{u\in \mathcal{X}^{1,2}_{\mathcal{D}}(U)\setminus \{0\}}  \frac{\int_{\Omega}|\nabla{u}|^2\,dx+ \int_{Q} \frac{|u(x)-u(y)|^{2}}{|x-y|^{n+2s}} \, dxdy} {\dint_{\Omega} | u|^2\,dx}\,.    
\end{equation*}
\
Equivalently, we can write  $\lambda_1 $ as
\begin{equation}\label{1ev}
\lambda_1=\inf_{ u\in \mathcal{X}^{1,2}_{\mathcal{D}}(U)\setminus\{0\},\, \|u\|^2_{L^2(\Omega)}=1}  \bigg(\int_{\Omega}|\nabla{u}|^2\,dx+ \int_{Q} \frac{|u(x)-u(y)|^{2}}{|x-y|^{n+2s}} \, dx dy\bigg).    
\end{equation}

\noindent\textbf{Proof of the Theorem \ref{prop3.1}:}
\begin{proof}
    The proof is established in Proposition 3.2, Lemma 3.3, and Proposition 3.4 as presented in \cite{JTL}. 
    \end{proof}
%     \begin{Lemma}\label{ll2.9}
%   The first eigenvalue of $\mathcal L$ with mixed boundary conditions, as in \eqref{1}, is positive i.e. $\lambda_1>0$.
%      \end{Lemma}
%      \begin{proof}
% This is a straightforward consequence of that $\lambda_1$ is achieved as established in the proof of Proposition \ref{prop3.1}.
% \end{proof}
% The following lemma has been established, as stated in Proposition 3.4 of \cite{JTL}.
% \begin{Lemma}\label{l2.8}
%     The first eigenvalue $\lambda_1$ is simple; that is if $u\in \mathcal{X}^{1,2}_{\mathcal{D}}(U)$ is a solution of 
%     \begin{equation}
%     \int_{\Omega} \nabla u\cdot\nabla \varphi \,dx +\int_{Q} \frac{(u(x)-u(y))(\varphi(x)-\varphi(y))}{|x-y|^{n+2s}} dxdy  = \lambda_1\int_{\Omega} u\varphi\,dx,
% \end{equation}
% for every $\varphi\in \mathcal{X}^{1,2}_{\mathcal{D}}(U),$ then $u= \zeta e_1$, where $e_1$ is an eigenfunction corresponding to eigenvalue $\lambda_1$ and $\zeta\in\R$.
% \end{Lemma}
Let $\mathcal{J}: \mathcal{X}^{1,2}_{\mathcal{D}}(U)\to \R$ be a functional  defined by
\begin{equation}\label{eqJ}
\mathcal{J}(u):= \frac{1}{2}\int_{\Omega} |\nabla u|^2 \,dx +\frac{1}{2}\int_{Q} \frac{|u(x)-u(y)|^2}{|x-y|^{n+2s}} dxdy= \frac{1}{2}\eta(u)^2, ~\forall~ u\in \mathcal{X}^{1,2}_{\mathcal{D}}(U),
    \end{equation}which is convex and continuously differentiable, that is $C^1$, with its derivative given by
\begin{equation}\label{deri}
    \langle \mathcal{J}^{\prime}(u), v\rangle = \int_{\Omega} \nabla u\cdot\nabla v \,dx +\int_{Q} \frac{(u(x)-u(y))(v(x)-v(y))}{|x-y|^{n+2s}} dxdy, 
\end{equation}
for any $v\in \mathcal{X}^{1,2}_{\mathcal{D}}(U).$

\vspace{0.2em}
Using the minimization method, we shall present the following crucial lemma to establish Theorem \ref{p2.9} and Theorem \ref{p210} later.
\begin{Lemma}\label{lem3.2m}
 Let \( \mathcal{C} \) be a nonempty, weakly closed subspace of \( \mathcal{X}^{1,2}_{\mathcal{D}}(U) \), and define  
\[
\mathcal{G} = \{ u \in \mathcal{C} : \|u\|_{L^2(\Omega)} = 1 \}.
\]  
Then, there exists \( \hat{u} \in \mathcal{G} \) such that  
\begin{equation}\label{min}
\min_{u \in \mathcal{G}} \mathcal{J}(u) = \mathcal{J}(\hat{u}),
\end{equation}  
and  
\begin{equation}\label{min2}
\int_{\Omega} \nabla \hat{u} \cdot \nabla \varphi \, dx 
+ \int_{Q} \frac{(\hat{u}(x) - \hat{u}(y))(\varphi(x) - \varphi(y))}{|x-y|^{n+2s}} \, dx \, dy 
= \lambda^{\star} \int_{\Omega} \hat{u}(x) \varphi(x) \, dx,
\end{equation}  
for every \( \varphi \in \mathcal{C} \), where \( \lambda^{\star} = 2 \mathcal{J}(\hat{u}) > 0 \).  

    \end{Lemma}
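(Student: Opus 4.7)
The plan is to use the direct method of the calculus of variations followed by a Lagrange multiplier argument adapted to the constraint manifold $\mathcal{G}$. First I would pick a minimizing sequence $\{u_k\}\subset\mathcal{G}$ with $\mathcal{J}(u_k)\to m:=\inf_{\mathcal{G}}\mathcal{J}$. Because $\|u_k\|_{L^2(\Omega)}=1$ and $\eta(u_k)^2=2\mathcal{J}(u_k)$ is bounded, the sequence is bounded in $\mathcal{X}^{1,2}_{\mathcal{D}}(U)$. Since this space is a Hilbert space, I can extract a subsequence with $u_k\weakly \hat{u}$ in $\mathcal{X}^{1,2}_{\mathcal{D}}(U)$; weak closedness of $\mathcal{C}$ gives $\hat{u}\in\mathcal{C}$. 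The compact embedding in Remark \ref{r2.5} implies $u_k\to\hat{u}$ in $L^2(\Omega)$, so $\|\hat{u}\|_{L^2(\Omega)}=1$ and hence $\hat{u}\in\mathcal{G}$. Finally, since $\mathcal{J}=\tfrac12\eta(\cdot)^2$ is convex and continuous on $\mathcal{X}^{1,2}_{\mathcal{D}}(U)$, it is weakly sequentially lower semicontinuous, giving $\mathcal{J}(\hat{u})\le \liminf_k\mathcal{J}(u_k)=m$, which together with $\hat{u}\in\mathcal{G}$ forces equality in \eqref{min}.

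For the Euler--Lagrange identity \eqref{min2}, I would avoid invoking abstract Lagrange multipliers and argue directly on the constraint. Fix $\varphi\in\mathcal{C}$; for $|t|$ small, the function $\hat{u}+t\varphi$ is nonzero in $L^2(\Omega)$, so the normalized curve
\begin{equation*}
\gamma(t):=\frac{\hat{u}+t\varphi}{\|\hat{u}+t\varphi\|_{L^2(\Omega)}}
\end{equation*}
lies in $\mathcal{G}$ (using that $\mathcal{C}$ is a subspace), with $\gamma(0)=\hat{u}$. The scalar function $t\mapsto \mathcal{J}(\gamma(t))$ therefore attains a minimum at $t=0$. Expanding $\mathcal{J}(\hat{u}+t\varphi)=\mathcal{J}(\hat{u})+t\langle\mathcal{J}'(\hat{u}),\varphi\rangle+\tfrac{t^2}{2}\eta(\varphi)^2$ and $\|\hat{u}+t\varphi\|_{L^2(\Omega)}^2 = 1+2t\int_\Omega\hat{u}\varphi\,dx+t^2\|\varphi\|_{L^2(\Omega)}^2$, differentiating the quotient $\mathcal{J}(\gamma(t))=\mathcal{J}(\hat{u}+t\varphi)/\|\hat{u}+t\varphi\|_{L^2(\Omega)}^2$ at $t=0$ and setting the derivative equal to zero yields
\begin{equation*}
\langle\mathcal{J}'(\hat{u}),\varphi\rangle = 2\mathcal{J}(\hat{u})\int_\Omega \hat{u}\varphi\,dx,
\end{equation*}
which is exactly \eqref{min2} with $\lambda^{\star}=2\mathcal{J}(\hat{u})$ after inserting formula \eqref{deri} for $\mathcal{J}'(\hat u)$.

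Positivity of $\lambda^{\star}$ is the easiest part: since $\|\hat{u}\|_{L^2(\Omega)}=1$, Proposition \ref{Poin} gives $1\le C\,\eta(\hat{u})^2=2C\mathcal{J}(\hat{u})=C\lambda^{\star}$, so $\lambda^{\star}\ge 1/C>0$. The main obstacle I anticipate is making sure weak lower semicontinuity is applied cleanly: the gradient term $\int_\Omega|\nabla u|^2$ and the Gagliardo term $\int_Q|u(x)-u(y)|^2/|x-y|^{n+2s}\,dxdy$ are both squared seminorms that are convex and continuous, hence weakly lower semicontinuous on $\mathcal{X}^{1,2}_{\mathcal{D}}(U)$; a small subtlety is that the Gagliardo integration is over $Q=\mathbb{R}^{2n}\setminus(\Omega^c\times\Omega^c)$ rather than $\Omega\times\Omega$, but since the weak convergence takes place in $H^1(\mathbb R^n)$ and the integrand is the same norm on this domain, the standard lower semicontinuity applies. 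Everything else (boundedness, extraction, $L^2$ compactness, smoothness of the normalization) is routine.
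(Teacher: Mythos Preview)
Your proposal is correct and follows essentially the same route as the paper: a direct-method argument (bounded minimizing sequence, weak compactness plus weak closedness of $\mathcal{C}$, compact $L^2$ embedding, weak lower semicontinuity of $\tfrac12\eta(\cdot)^2$) for \eqref{min}, followed by differentiating $t\mapsto \mathcal{J}\big((\hat u+t\varphi)/\|\hat u+t\varphi\|_{L^2(\Omega)}\big)$ at $t=0$ for \eqref{min2}. The only cosmetic difference is that the paper obtains $\lambda^\star>0$ by noting $\mathcal{J}(\hat u)=0\Rightarrow\hat u\equiv0\notin\mathcal{G}$, whereas you invoke Proposition~\ref{Poin} to get the quantitative bound $\lambda^\star\ge 1/C$.
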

\noindent
\begin{proof}
 First, we aim to prove \eqref{min}.  Suppose    \( \{ u_j \}_{j \in \mathbb{N}} \subset \mathcal{G} \), is a minimizing sequence such that
\begin{equation}\label{eq209}
\mathcal{J}(u_j) \to \inf_{u \in \mathcal{G}} \mathcal{J}(u) \geq 0 > -\infty \ \text{as} \ j \to +\infty. 
    \end{equation}
Then the sequence \( \{ \mathcal{J}(u_j) \}_{j \in \mathbb{N}} \) is bounded in \( \mathbb{R} \). Using the  definition of \( \mathcal{J} \), see \eqref{eqJ}, we get
\begin{equation}\label{eq2010}
\{ \eta(u_j) \}_{j \in \mathbb{N}} \ \text{is also bounded.} 
    \end{equation}
Since \(\mathcal{X}^{1,2}_{\mathcal{D}}(U)\) is a Hilbert space, up to a subsequence, still denoted by \(\{u_j\}_{j \in \mathbb{N}}\), such that \(\{u_j\}_{j \in \mathbb{N}}\) converges weakly in \(\mathcal{X}^{1,2}_{\mathcal{D}}(U)\) to some \(\hat{u} \in \mathcal{C}\), since \(\mathcal{C}\) is weakly closed. By the definition of weak convergence, we have  
\[
{\langle u_j, \varphi \rangle }_{\mathcal{X}^{1,2}_{\mathcal{D}}(U)} \to {\langle \hat{u}, \varphi \rangle }_{\mathcal{X}^{1,2}_{\mathcal{D}}(U)}
\]
for any \(\varphi \in \mathcal{X}^{1,2}_{\mathcal{D}}(U)\), as \(j \to +\infty\).
 Moreover
\begin{equation}\label{eq2.011}
\begin{cases}
u_j \rightharpoonup \hat{u} \mbox{ weakly in } \mathcal{X}^{1,2}_{{\mathcal{D}}}(U)\\
u_j \to \hat{u}\mbox{ strongly in } \ L^2 _{\mathrm{loc}}(\R^n),\\ 
u_j \to \hat{u} \mbox{ a.e in }  \R^n,
\end{cases}
    \end{equation}
as \( j \to +\infty \),  and \( \| \hat{u} \|_{L^2(\Omega)} = 1 \) by Remark \ref{r2.5}. Thus, \( \hat{u} \in \mathcal{G} \). By applying the weak lower semicontinuity of the norm in \(\mathcal{X}^{1,2}_{\mathcal{D}}(U)\), we obtain
\begin{align*}
\liminf_{j \to +\infty} \mathcal{J}(u_j) 
&= \frac{1}{2} \liminf_{j \to +\infty} \eta(u_j)^2 
\geq \frac{1}{2} \eta(\hat{u})^2=
 \mathcal{J}(\hat{u})
\geq \inf_{u \in \mathcal{G}} \mathcal{J}(u).
\end{align*}
Thus,  from \eqref{eq209}, it is easy to conclude \eqref{min} that is
\[
\mathcal{J}(\hat{u}) = \inf_{u \in \mathcal{G}} \mathcal{J}(u).
\]
We now proceed to prove \eqref{min2}. Let \( \varepsilon \in (-1,1) \), \( \varphi \in \mathcal{C} \) then we define  
\[
d_\varepsilon = \| \hat{u} + \varepsilon \varphi \|_{L^2(\Omega)} \quad \text{and} \quad u_\varepsilon = \frac{\hat{u} + \varepsilon \varphi}{d_\varepsilon}.
\]  
It is easy to verify that \( u_\varepsilon \in \mathcal{G} \).  Additionally, we have the following relations 
\[
d_\varepsilon^2 = \| \hat{u} \|_{L^2(\Omega)}^2 + 2\varepsilon \int_\Omega \hat{u}(x)\varphi(x) \, dx + o(\varepsilon),
\]  
and  
\[
\eta( \hat{u} + \varepsilon \varphi)^2 = \eta( \hat{u})^2 + 2\varepsilon \langle \hat{u}, \varphi \rangle_{\mathcal{X}^{1,2}_{\mathcal{D}}(U)} + o(\varepsilon).
\]
Since \( \| \hat{u} \|_{L^2(\Omega)} = 1 \), we can write  
\[
2 \mathcal{J}(u_\varepsilon) = \frac{\eta( \hat{u})^2 + 2\varepsilon \langle \hat{u}, \varphi \rangle_{\mathcal{X}^{1,2}_{\mathcal{D}}(U)} + o(\varepsilon)}{1 + 2\varepsilon \int_\Omega \hat{u}(x)\varphi(x) \, dx + o(\varepsilon)}.
\]
Expanding this, we obtain  
\[
2 \mathcal{J}(u_\varepsilon) = \left( 2 \mathcal{J}(\hat{u}) + 2\varepsilon \langle \hat{u}, \varphi \rangle_{\mathcal{X}^{1,2}_{\mathcal{D}}(U)} + o(\varepsilon) \right) \left( 1 - 2\varepsilon \int_\Omega \hat{u}(x)\varphi(x) \, dx + o(\varepsilon) \right).
\]
Simplifying further, we expand the functional \( \mathcal{J}(u_\varepsilon) \) as follows  
\begin{equation}\label{eqep}
2 \mathcal{J}(u_\varepsilon) = 2 \mathcal{J}(\hat{u}) + 2\varepsilon \left( \langle \hat{u}, \varphi \rangle_{\mathcal{X}^{1,2}_{\mathcal{D}}(U)} - 2 \mathcal{J}(\hat{u}) \int_\Omega \hat{u}(x)\varphi(x) \, dx \right) + o(\varepsilon).
\end{equation}
Now, using \eqref{eqep} and the minimality of \( \hat{u} \), i.e., 
\[
\mathcal{J}(\hat{u}) \leq \mathcal{J}(u_\varepsilon) \quad \text{for all } u_\varepsilon \in \mathcal{X}_{\mathcal{D}}^{1,2}(U),
\]
then we have
\[
2 \mathcal{J}(\hat{u}) \leq 2 \mathcal{J}(\hat{u}) + 2 \varepsilon \left( \langle \hat{u}, \varphi \rangle_{\mathcal{X}^{1,2}_{\mathcal{D}}(U)} - 2 \mathcal{J}(\hat{u}) \int_\Omega \hat{u}(x)\varphi(x) \, dx \right) + o(\varepsilon).
\]
Cancelling \( 2 \mathcal{J}(\hat{u}) \) from both sides and dividing through by \( \varepsilon \), for small \( \varepsilon \), we obtain
\[
\langle \hat{u}, \varphi \rangle_{\mathcal{X}^{1,2}_{\mathcal{D}}(U)} - 2 \mathcal{J}(\hat{u}) \int_\Omega \hat{u}(x) \varphi(x) \, dx \geq 0.
\]
Similarly, considering \( -\varepsilon \) (perturbing in the opposite direction), we deduce
\[
\langle \hat{u}, \varphi \rangle_{\mathcal{X}^{1,2}_{\mathcal{D}}(U)} - 2 \mathcal{J}(\hat{u}) \int_\Omega \hat{u}(x) \varphi(x) \, dx \leq 0.
\]
Thus, 
\[
\langle \hat{u}, \varphi \rangle_{\mathcal{X}^{1,2}_{\mathcal{D}}(U)} = 2 \mathcal{J}(\hat{u}) \int_\Omega \hat{u}(x) \varphi(x) \, dx.
\]
that is
\begin{equation*}
\int_\Omega \nabla \hat{u} \cdot \nabla \varphi \, dx + \int_Q \frac{(\hat{u}(x) - \hat{u}(y))(\varphi(x) - \varphi(y))}{|x-y|^{n+2s}} \, dx \, dy = 2 \mathcal{J}(\hat{u}) \int_\Omega \hat{u}(x) \varphi(x) \, dx,
\end{equation*}
which follows claim \eqref{min2}.
Note also that \( \mathcal{J}(\hat{u}) > 0 \), since \( \mathcal{J}(\hat{u}) = 0 \) would imply \( \hat{u} \equiv 0 \), which contradicts \( 0 \notin \mathcal{G} \). Thus, the proof is complete. 
\end{proof}
\begin{remark}\label{rem2.1}
   If $e$ is an eigenfunction of \eqref{dd2.2} corresponding to an eigenvalue $\lambda$ and choosing $\varphi=e$ in \eqref{dd2.2}, it follows that
    \begin{equation*}
        \int_{\Omega} |\nabla e|^2 \,dx + \int_{Q} \frac{|e(x) - e(y)|^2}{|x-y|^{n+2s}} \,dxdy = \lambda \|e\|_{L^2(\Omega)}^2.
    \end{equation*}
   \end{remark}
\noindent \textbf{Proof of Theorem \ref{p2.9}:}
 \begin{proof}
    \textbf{Proof of assertion $(1):$} 
  We define $\lambda_{k+1}$ as in \eqref{eq204}. It is easy to see that due to Lemma \ref{lem3.2m} the infimum in \eqref{eq204} exists and it is attained at some $e_{k+1} \in \mathcal{P}_{k+1}$, thanks to \eqref{min} and \eqref{min2}, applied here with $\mathcal{C}=\mathcal{P}_{k+1}$, which, by construction, is weakly closed. 
Furthermore, noting fact  $\mathcal{P}_{k+1} \subseteq \mathcal{P}_k \subseteq \mathcal{X}^{1,2}_{\mathcal{D}}(U)$,  we have 
\begin{equation}\label{eq211}
0<\lambda_1 \leqslant \lambda_2 \leqslant \ldots \leqslant \lambda_k \leqslant \lambda_{k+1} \leqslant \ldots
\end{equation}
First, we aim to show that 
\begin{equation}\label{eq212}
\lambda_1 \neq \lambda_2.
    \end{equation}
Indeed, if not, $e_2\in\mathcal{P}_2$ also would be an eigenfunction corresponding to $\lambda_1.$ Then using asseration (3) of Theorem \ref{prop3.1}, we can write $e_2=\zeta e_1$, with $\zeta \in \mathbb{R}$, and $\zeta \neq 0$ being $e_2 \not \equiv 0$. Hence, we obtain
$$
0=\left\langle e_1, e_2\right\rangle_{\mathcal{X}^{1,2}_{\mathcal{D}}(U)}=\zeta\eta(e_1)^2,
$$
that implies $e_1 \equiv 0$, and get a contradiction. Thus,  $\lambda_1\neq \lambda_2.$ From \eqref{eq211} and \eqref{eq212} we obtain \eqref{ineql}.
Also, by \eqref{min2} with $\mathcal{C}=\mathcal{P}_{k+1}$, then for all $\varphi~ \in \mathcal{P}_{k+1}$, we have
\begin{equation}\label{eq214}
\int_{\Omega} \nabla e_{k+1}\cdot \nabla \varphi\,dx+ \int_{Q}\frac{\left(e_{k+1}(x)-e_{k+1}(y)\right)(\varphi(x)-\varphi(y))}{|x-y|^{n+2s}}  d x d y 
=\lambda_{k+1} \int_{\Omega} e_{k+1}(x) \varphi(x) d x.
    \end{equation}
To establish that \( \lambda_{k+1} \) is an eigenvalue with eigenfunction \( e_{k+1} \), it is necessary to demonstrate that  
\begin{equation}\label{eqn216}  
\text{equation \eqref{eq214} holds for any } \varphi \in \mathcal{X}^{1,2}_{\mathcal{D}}(U), \text{ and not just for } \varphi \in \mathcal{P}_{k+1}.  
\end{equation}  
To establish this, we proceed with the induction method. Assuming that the claim holds for \( 1, \ldots, k \), we shall prove it for \( k+1 \). The initial case is ensured by the fact that \( \lambda_1 \) is an eigenvalue, as demonstrated in Proposition \ref{prop3.1}.  We utilize the direct sum decomposition  
\[
\mathcal{X}^{1,2}_{\mathcal{D}}(U) = \operatorname{span}\{e_1, \ldots, e_k\} \oplus \left(\operatorname{span}\{e_1, \ldots, e_k\}\right)^{\perp} = \operatorname{span}\{e_1, \ldots, e_k\} \oplus \mathcal{P}_{k+1},
\]  
where the orthogonal complement \( \perp \) is defined with respect to the scalar product of \( \mathcal{X}^{1,2}_{\mathcal{D}}(U) \), denoted by \( \langle \cdot, \cdot \rangle_{\mathcal{X}^{1,2}_{\mathcal{D}}(U)} \). Thus, given any \( \varphi \in \mathcal{X}^{1,2}_{\mathcal{D}}(U) \), we can decompose it as \( \varphi = \varphi_1 + \varphi_2 \), where \( \varphi_2 \in \mathcal{P}_{k+1} \) and  \[
\varphi_1 = \sum_{i=1}^k a_i e_i
\]  
for some coefficients \( a_1, \ldots, a_k \in \mathbb{R} \). By testing equation \eqref{eq214} with \( \varphi_2 = \varphi - \varphi_1 \), we  see that 
\begin{equation}\label{eq215}
\begin{aligned}
& \int_{\Omega} \nabla e_{k+1}\cdot \nabla \varphi\,dx+ \int_{Q}\frac{\left(e_{k+1}(x)-e_{k+1}(y)\right)(\varphi(x)-\varphi(y))}{|x-y|^{n+2s}}  \,d x d y-\lambda_{k+1} \int_{\Omega} e_{k+1}(x) \varphi(x) d x \\
& =\int_{\Omega} \nabla e_{k+1}\cdot \nabla \varphi_1\,dx+ \int_{Q}\frac{\left(e_{k+1}(x)-e_{k+1}(y)\right)(\varphi_1(x)-\varphi_1(y))}{|x-y|^{n+2s}}  \,d x d y-\lambda_{k+1} \int_{\Omega} e_{k+1}(x) \varphi_1(x) d x
\end{aligned}
    \end{equation}
$$
\begin{aligned}
&=\sum_{i=1}^k a_i\left[\int_{\Omega} \nabla e_{k+1}\cdot \nabla e_i\,dx+ \int_{Q}\frac{\left(e_{k+1}(x)-e_{k+1}(y)\right)(e_i(x)-e_i(y))}{|x-y|^{n+2s}}  \,d x d y\right. \left.-\lambda_{k+1} \int_{\Omega} e_{k+1}(x) e_i(x) d x\right].
\end{aligned}
$$
Moreover, by testing \eqref{dd2.2} for \( e_i \) against \( e_{k+1} \) for \( i = 1, \ldots, k \) (which is justified by the inductive assumption) and noting that \( e_{k+1} \in \mathcal{P}_{k+1} \), we observe that  
\[
0 = \int_{\Omega} \nabla e_{k+1} \cdot \nabla e_i \, dx + \int_{Q} \frac{\left(e_{k+1}(x) - e_{k+1}(y)\right)\left(e_i(x) - e_i(y)\right)}{|x - y|^{n+2s}} \, dx \, dy = \lambda_i \int_{\Omega} e_{k+1}(x) e_i(x) \, dx.
\]  
By \eqref{eq211}, for any \( i = 1, \ldots, k \), it follows that  
\[
\int_{\Omega} \nabla e_{k+1} \cdot \nabla e_i \, dx + \int_{Q} \frac{\left(e_{k+1}(x) - e_{k+1}(y)\right)\left(e_i(x) - e_i(y)\right)}{|x - y|^{n+2s}} \, dx \, dy = 0 = \int_{\Omega} e_{k+1}(x) e_i(x) \, dx.  
\]  
 Substituting this into \eqref{eq215}, we deduce that \eqref{eq214} holds for all \( \varphi \in \mathcal{X}^{1,2}_{\mathcal{D}}(U) \). Consequently, \( \lambda_{k+1} \) is an eigenvalue with eigenfunction \( e_{k+1} \).  

Next, we aim to prove that \eqref{eql206}. To do so, we first demonstrate that for \( k, m \in \mathbb{N} \) with \( k\neq m \), the following holds:  
\begin{equation}\label{eq216}  
\left\langle e_k, e_m \right\rangle_{\mathcal{X}^{1,2}_{\mathcal{D}}(U)} = 0 = \int_{\Omega} e_k(x) e_m(x) \, dx.  
\end{equation}  

Indeed, let \( k > m \), so \( k - 1 \geqslant m \). By the inclusion of orthogonal complements, we have  
\[
e_k \in \mathcal{P}_k = \left(\operatorname{span}\{e_1, \ldots, e_{k-1}\}\right)^{\perp} \subseteq \left(\operatorname{span}\{e_m\}\right)^{\perp}.  
\]  
As a result,  we have
\begin{equation}\label{eq217}  
\left\langle e_k, e_m \right\rangle_{\mathcal{X}^{1,2}_{\mathcal{D}}(U)} = 0.  
\end{equation}  
However, since \( e_k \) is an eigenfunction, we test \( e_k \) with \( \varphi = e_m \) in equation \eqref{dd2.2}, then we obtain  
\[
\int_{\Omega} \nabla e_{k} \cdot \nabla e_m \, dx + \int_{Q} \frac{\left(e_{k}(x) - e_{k}(y)\right)(e_m(x) - e_m(y))}{|x-y|^{n+2s}} \, dx \, dy  
= \lambda_k \int_{\Omega} e_k(x) e_m(x) \, dx.
\]
From this and equation \eqref{eq217}, we derive \eqref{eq216}. To complete the proof of \eqref{eql206}, assume, for the sake of contradiction, that \( \lambda_k \to a \) for some constant \( a \in \mathbb{R} \), implying \( \lambda_k \) is bounded in \( \mathbb{R} \). Since \( \eta(e_k)^2 = \lambda_k \), using Remark \ref{rem2.1}, and applying the embedding result (see Remark \ref{r2.5}), we deduce that there exists a subsequence for which  
\[
e_{k_j} \to e_{\infty} \quad \text{in } L^2_{\text{loc}}(\mathbb{R}^n)
\]  
as \( k_j \to +\infty \), for some \( e_{\infty} \in L^2_{\text{loc}}(\mathbb{R}^n) \). In particular,  
\begin{equation} \label{eq218}
e_{k_j} \text{ is a Cauchy sequence in } L^2_{\text{loc}}(\mathbb{R}^n).
\end{equation}
However, by \eqref{eq216}, \( e_{k_j} \) and \( e_{k_i} \) are orthogonal in \( L^2_{\text{loc}}(\mathbb{R}^n) \), leading to  
\[
\left\|e_{k_j} - e_{k_i}\right\|_{L^2(\Omega)}^2 = \left\|e_{k_j}\right\|_{L^2(\Omega)}^2 + \left\|e_{k_i}\right\|_{L^2(\Omega)}^2 = 2.
\]
Since this contradicts \eqref{eq218}, we have established the claim \eqref{eql206}.  
To complete the proof of assertion (1), it remains to show that the sequence of eigenvalues constructed in \eqref{eq204} exhausts all the eigenvalues of the problem, i.e., every eigenvalue of problem \eqref{dd2.2} can be expressed in the form \eqref{eq204}. We proceed by contradiction.  
Assume there exists an eigenvalue  
\begin{equation} \label{eq220}
\lambda \notin \left\{\lambda_k\right\}_{k \in \mathbb{N}},
\end{equation}  
and let \( e \in \mathcal{X}^{1,2}_{\mathcal{D}}(U) \) be an eigenfunction associated with \( \lambda \), normalized such that \( \|e\|_{L^2(\Omega)} = 1 \). Using Remark \ref{rem2.1}, we have  
\begin{equation} \label{2J}
2 \mathcal{J}(e) = \int_{\Omega} |\nabla e(x)|^2 \, dx + \int_{Q} \frac{\left|e(x) - e(y)\right|^2}{|x-y|^{n+2s}} \, dx \, dy = \lambda.
\end{equation}  
From the definition of \( \lambda_1 \) as the infimum given in \eqref{1ev},  
\[
\lambda_1 = \int_{\Omega} |\nabla e_1|^2 \, dx + \int_{Q} \frac{|e_1(x) - e_1(y)|^2}{|x-y|^{n+2s}} \, dx \, dy,
\]  
it follows that  
\[
\lambda = 2 \mathcal{J}(e) \geq 2 \mathcal{J}\left(e_1\right) = \lambda_1.
\]  
This, \eqref{eq220} and \eqref{eql206} imply that there exists $k \in \mathbb{N}$ such that
\begin{equation}\label{eq222}
\lambda_k<\lambda<\lambda_{k+1} .
    \end{equation}
Now, we claim that  
\begin{equation} \label{eq223}
e \notin \mathcal{P}_{k+1}.
\end{equation}  
To see this, assume \( e \in \mathcal{P}_{k+1} \). Then, using \eqref{2J} and \eqref{eq204}, we obtain that  
\[
\lambda = 2 \mathcal{J}(e) \geq \lambda_{k+1}.
\]  
This contradicts \eqref{eq222}, thereby proving \eqref{eq223}.  
As a consequence of \eqref{eq223}, there exists some \( i \in \{1, \ldots, k\} \) such that  
\[
\left\langle e, e_i \right\rangle_{\mathcal{X}^{1,2}_{\mathcal{D}}(U)} \neq 0.
\]  
However, this contradicts with [\cite{JTL}, Lemma 3.6], thereby proving that \eqref{eq220} is not true. Hence, all eigenvalues belong to the sequence \( \{\lambda_k\}_{k \in \mathbb{N}} \),  
  assertion (1) follows.\\

\textbf{Proof of assertion $(2):$} 
By applying \eqref{min} with \( \mathcal{C} = \mathcal{P}_{k+1} \), the minimum that defines \( \lambda_{k+1} \) is achieved by some \( e_{k+1} \in \mathcal{P}_{k+1} \).  The fact that \( e_{k+1} \) is an eigenfunction corresponding to \( \lambda_{k+1} \) was established in \eqref{eqn216}, and \eqref{eq2011} follows directly from \eqref{min2}.
 \end{proof}

\noindent \textbf{Proof of Theorem \ref{p210}:}
 \begin{proof}
The orthogonality stated follows directly from \eqref{eq216}. To complete the proof of this proposition, it remains to show that the sequence of eigenfunctions \( \{e_k\}_{k \in \mathbb{N}} \) forms a basis for both \( L^2(\Omega) \) and \( \mathcal{X}^{1,2}_{\mathcal{D}}(U) \).  
We begin by proving that it is a basis for \( \mathcal{X}^{1,2}_{\mathcal{D}}(U) \). Specifically, we aim to demonstrate that  
\begin{equation}\label{eq340}  
\text{if } w \in \mathcal{X}^{1,2}_{\mathcal{D}}(U) \text{ satisfies } \langle w, e_k \rangle_{\mathcal{X}^{1,2}_{\mathcal{D}}(U)} = 0 \text{ for all } k \in \mathbb{N}, \text{ then } w \equiv 0.  
\end{equation}  
We proceed by contradiction and assume there exists a nontrivial \( w\in \mathcal{X}^{1,2}_{\mathcal{D}}(U) \) such that  
\begin{equation}\label{eq341}  
\langle w, e_k \rangle_{\mathcal{X}^{1,2}_{\mathcal{D}}(U)} = 0 \quad \text{for all } k \in \mathbb{N}.  
\end{equation}  
Without loss of generality, we normalize \( w \) such that \( \|w\|_{L^2(\Omega)} = 1 \). From \eqref{eql206}, there exists \( k \in \mathbb{N} \) such that  
\[
2 \, \mathcal{J}(w) < \lambda_{k+1} = \min_{u \in \mathcal{P}_{k+1}, \|u\|_{L^2(\Omega)}=1} \left\{\int_{\Omega} |\nabla u|^2 \, dx + \int_{Q} \frac{|u(x) - u(y)|^2}{|x-y|^{n+2s}} \, dx \, dy\right\}.  
\]
Thus, \( w\notin \mathcal{P}_{k+1} \), which implies there exists \( j \in \mathbb{N} \) such that \( \langle w, e_j \rangle_{\mathcal{X}^{1,2}_{\mathcal{D}}(U)} \neq 0 \). This contradicts \eqref{eq341}, thereby proving \eqref{eq340}.  

Now, using a standard Fourier analysis argument, we now show that \( \{e_k\}_{k \in \mathbb{N}} \) forms a basis for \( \mathcal{X}^{1,2}_{\mathcal{D}}(U) \). We define \( M_i = \frac{e_i}{\eta(e_i)} \), and for any \( f \in \mathcal{X}^{1,2}_{\mathcal{D}}(U) \),   
\[
f_j = \sum_{i=1}^j \langle f, M_i \rangle_{\mathcal{X}^{1,2}_{\mathcal{D}}(U)} M_i.  
\]  
Note that, for any \( j \in \mathbb{N} \),  
\begin{equation}\label{eq342}  
f_j \in \operatorname{span}\{e_1, \dots, e_j\}.  
\end{equation}  
Define \( w_j = f - f_j \). Using the orthogonality of \( \{e_k\}_{k \in \mathbb{N}} \) in \( \mathcal{X}^{1,2}_{\mathcal{D}}(U) \), we have  
\begin{equation}\label{eq3032}
0 \leq \eta(w_j)^2 = \langle w_j, w_j \rangle_{\mathcal{X}^{1,2}_{\mathcal{D}}(U)} = \langle f - f_j, f - f_j \rangle_{\mathcal{X}^{1,2}_{\mathcal{D}}(U)}= \eta(f)^2 + \eta(f_j)^2 - 2 \langle f, f_j \rangle_{\mathcal{X}^{1,2}_{\mathcal{D}}(U)}     
\end{equation} 
Substituting the definition of \( f_j \), we obtain  
\[
\eta(w_j)^2 = \eta(f)^2 + \langle f_j, f_j \rangle_{\mathcal{X}^{1,2}_{\mathcal{D}}(U)} - 2 \sum_{i=1}^j \langle f, M_i \rangle^2_{\mathcal{X}^{1,2}_{\mathcal{D}}(U)},  
\]  
simplifying further, we get 
\[
\eta(w_j)^2 = \eta(f)^2 - \sum_{i=1}^j \langle f, M_i \rangle^2_{\mathcal{X}^{1,2}_{\mathcal{D}}(U)}.  
\]  
Thus, for any \( j \in \mathbb{N} \), putting this in \eqref{eq3032}, we conclude  
\[
\sum_{i=1}^j \langle f, M_i \rangle^2_{\mathcal{X}^{1,2}_{\mathcal{D}}(U)} \leq \eta(f)^2.  
\]  
This shows that the series  
\[
\sum_{i=1}^{+\infty} \langle f, M_i \rangle^2_{\mathcal{X}^{1,2}_{\mathcal{D}}(U)}  
            ~~~\text{is convergent}.
            \]
 Thus, if we define  
\(
\tau_j = \sum_{i=1}^j \langle f, M_i \rangle^2_{\mathcal{X}^{1,2}_{\mathcal{D}}(U)},  
\)  
we have that  
\begin{equation}\label{eq343}
\{\tau_j\}_{j \in \mathbb{N}} \text{ is a Cauchy sequence in } \mathbb{R}.
\end{equation}  
Using the orthogonality of \( \{e_k\}_{k \in \mathbb{N}} \) in \( \mathcal{X}^{1,2}_{\mathcal{D}}(U) \), consider \( l > j \) to get 
\begin{align*}
w_l - w_j = \sum_{i=j+1}^l \langle f, M_i \rangle_{\mathcal{X}^{1,2}_{\mathcal{D}}(U)} M_i ~\implies~ \eta(w_l - w_j)^2 &=\eta\left( \sum_{i=j+1}^l \langle f, M_i \rangle_{\mathcal{X}^{1,2}_{\mathcal{D}}(U)} M_i \right)^2\\
&= \sum_{i=j+1}^l \langle f, M_i \rangle^2_{\mathcal{X}^{1,2}_{\mathcal{D}}(U)}
= \tau_l - \tau_j.
    \end{align*}  
From above and recalling \eqref{eq343}, we conclude that
\( \{w_j\}_{j \in \mathbb{N}} \) forms a Cauchy sequence in \( \mathcal{X}^{1,2}_{\mathcal{D}}(U) \).  
Since \( \mathcal{X}^{1,2}_{\mathcal{D}}(U) \) is a Hilbert space, there exists \( w\in \mathcal{X}^{1,2}_{\mathcal{D}}(U) \) such that  
\begin{equation}\label{eq344}
w_j \to w \text{ in } \mathcal{X}^{1,2}_{\mathcal{D}}(U) \text{ as } j \to +\infty.
\end{equation}   
For \( j \geq k \), observe that  
\[
\langle w_j, M_k \rangle_{\mathcal{X}^{1,2}_{\mathcal{D}}(U)} = \langle f, M_k \rangle_{\mathcal{X}^{1,2}_{\mathcal{D}}(U)} - \langle f_j, M_k \rangle_{\mathcal{X}^{1,2}_{\mathcal{D}}(U)}.
\]  
Since \( f_j = \sum_{i=1}^j \langle f, M_i \rangle_{\mathcal{X}^{1,2}_{\mathcal{D}}(U)} M_i \), it follows that  
\[
\langle f_j, M_k \rangle_{\mathcal{X}^{1,2}_{\mathcal{D}}(U)} = \langle f, M_k \rangle_{\mathcal{X}^{1,2}_{\mathcal{D}}(U)} \quad \text{for } k \leq j.
\]  
Hence,  
\(
\langle w_j, M_k \rangle_{\mathcal{X}^{1,2}_{\mathcal{D}}(U)} = 0.
\)  
Thus, by \eqref{eq344}, it easily follows that $\langle w, M_k \rangle_{\mathcal{X}^{1,2}_{\mathcal{D}}(U)} = 0$, for any $k \in \mathbb{N}$, so, by \eqref{eq340}, we get $w= 0$. So, we have
\[
f_j = f - w_j \to f - w = f \text{ in } \mathcal{X}^{1,2}_{\mathcal{D}}(U) ~ \text{as}~j \to +\infty.
\]
This and \eqref{eq342} yield that $\{e_k\}_{k \in \mathbb{N}}$ is a basis in $\mathcal{X}^{1,2}_{\mathcal{D}}(U)$.

To conclude the proof of this proposition, it remains to verify that \(\{e_k\}_{k \in \mathbb{N}}\) forms a basis for \(L^2(\Omega)\). Consider any \(w \in L^2(\Omega)\), and let a sequence \(\{w_j\}_{j \in \mathbb{N}} \subseteq C^{\infty}_0(U)\) be such that \(\|w_j - w\|_{L^2(\Omega)} \leq 1/j\). It is easy to see that \(w_j \in \mathcal{X}^{1,2}_{\mathcal{D}}(U)\).  Since \(\{e_k\}_{k \in \mathbb{N}}\) is already established as a basis for \(\mathcal{X}^{1,2}_{\mathcal{D}}(U)\), there exists \(k_j \in \mathbb{N}\) and a function \(v_j \in \text{span}\{e_1, \ldots, e_{k_j}\}\) such that  
\[
\eta(w_j - v_j) \leq \frac{1}{j}.
\]  
Using Remark \(\ref{r2.5}\), it follows that  
\[
\|w_j - v_j\|_{L^2(\Omega)} \leq C \eta(w_j - v_j) \leq \frac{C}{j},
\]  
where \(C = C(n, s, U) > 0\) is the embedding constant.  
By using the triangle inequality, it is easy to see that  
\[
\|w - v_j\|_{L^2(\Omega)} \leq \|w - w_j\|_{L^2(\Omega)} + \|w_j - v_j\|_{L^2(\Omega)} \leq \frac{C+1}{j}.
\]  
This demonstrates that the eigenfunctions \(\{e_k\}_{k \in \mathbb{N}}\) of \eqref{1} form a basis for \(L^2(\Omega)\). Hence, the proof is complete.
 \end{proof}
\begin{Proposition}
    Each eigenvalue \(\lambda_k\) has a finite multiplicity. Specifically, if an eigenvalue \(\lambda_k\) satisfies the condition  
\begin{equation}\label{eq230}
    \lambda_{k-1} < \lambda_k = \cdots = \lambda_{k+h} < \lambda_{k+h+1},
\end{equation}  
for some \(h \in \mathbb{N}_0\), then the set of all eigenfunctions associated with \(\lambda_k\) is given by  
\[
\text{span}\{e_k, e_{k+1}, \ldots, e_{k+h}\}.
\]  
\end{Proposition}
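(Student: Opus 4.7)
The inclusion $\mathrm{span}\{e_k,\ldots,e_{k+h}\}\subseteq\{\text{eigenfunctions of }\lambda_k\}$ is immediate from the linearity of \eqref{dd2.2}: under the hypothesis \eqref{eq230}, each $e_j$ with $k\leq j\leq k+h$ solves \eqref{dd2.2} with the same value $\lambda_k$, so every linear combination does too. The heart of the matter is the reverse inclusion, and my plan is to fix an arbitrary eigenfunction $e$ associated with $\lambda_k$, expand it in the orthogonal basis $\{e_j\}_{j\in\mathbb{N}}$ supplied by Theorem \ref{p210}, and show that only the indices $j$ with $\lambda_j=\lambda_k$ can contribute.

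To carry this out, I write
\[
e=\sum_{j=1}^{\infty}\alpha_j e_j \quad \text{in } \mathcal{X}^{1,2}_{\mathcal{D}}(U),\qquad \alpha_j=\frac{\langle e,e_j\rangle_{\mathcal{X}^{1,2}_{\mathcal{D}}(U)}}{\eta(e_j)^2},
\]
which is legitimate since Theorem \ref{p210} guarantees completeness and $\eta(e_j)^2=\lambda_j>0$ by Remark \ref{rem2.1}. Testing \eqref{dd2.2} for $e$ against $\varphi=e_j$ and, symmetrically, the weak formulation for $e_j$ against $\varphi=e$, and subtracting (using the symmetry of the underlying bilinear form, as already exploited in \eqref{eq216}), I obtain
\[
(\lambda_k-\lambda_j)\int_{\Omega} e(x)\,e_j(x)\,dx=0.
\]
For any $j$ with $\lambda_j\neq\lambda_k$ this forces $\int_{\Omega} e\,e_j\,dx=0$, and substituting back into either of the two identities yields $\langle e,e_j\rangle_{\mathcal{X}^{1,2}_{\mathcal{D}}(U)}=0$, whence $\alpha_j=0$. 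Combined with assumption \eqref{eq230}, the only surviving indices are $j\in\{k,\ldots,k+h\}$, so $e\in\mathrm{span}\{e_k,\ldots,e_{k+h}\}$, which is the desired inclusion.

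Finally, for finite multiplicity, the divergence \eqref{eql206} together with the monotonicity \eqref{ineql} forces the level set $\{j\in\mathbb{N}:\lambda_j=\lambda_k\}$ to be finite; under \eqref{eq230} it is exactly $\{k,k+1,\ldots,k+h\}$, so the associated eigenspace has dimension at most $h+1$, establishing the first assertion. I do not expect a serious obstacle: the argument rests entirely on standard spectral orthogonality, on completeness of the eigenbasis from Theorem \ref{p210}, and on the symmetry of the form defining \eqref{dd2.2}, all of which are already in place. The only mildly delicate point worth stating explicitly is the passage from $\int_\Omega e\,e_j\,dx=0$ to $\alpha_j=0$, which hinges on the identity $\langle e,e_j\rangle_{\mathcal{X}^{1,2}_{\mathcal{D}}(U)}=\lambda_j\int_\Omega e\,e_j\,dx$ obtained by using $e_j$ as an eigenfunction.
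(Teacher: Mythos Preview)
Your argument is correct, but it proceeds along a genuinely different line from the paper's own proof. The paper does \emph{not} expand $\phi$ in the full eigenbasis. Instead it writes $\phi=\phi_1+\phi_2$ with $\phi_1\in\mathrm{span}\{e_k,\ldots,e_{k+h}\}$ and $\phi_2$ in its orthogonal complement, then shows (via an external orthogonality lemma from \cite{JTL}) that $\phi_2\in\mathcal{P}_{k+h+1}$, and finally derives a contradiction from $\phi_2\not\equiv 0$ by feeding $\phi_2$ into the variational characterization \eqref{eq206} of $\lambda_{k+h+1}$ and comparing norms. Your route is the classical self-adjoint spectral argument: test the two weak formulations against one another to obtain $(\lambda_k-\lambda_j)\int_\Omega e\,e_j\,dx=0$, and conclude that all Fourier coefficients outside the level set vanish. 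This is shorter, avoids the appeal to \cite{JTL}, and makes no use of the min characterization \eqref{eq206}; the paper's approach, on the other hand, illustrates once more how the variational description of $\lambda_{k+h+1}$ controls functions in $\mathcal{P}_{k+h+1}$, which is thematically consistent with the rest of Section~3. Both arguments rest on the same symmetry of the bilinear form, and your explicit remark that $\langle e,e_j\rangle_{\mathcal{X}^{1,2}_{\mathcal{D}}(U)}=\lambda_j\int_\Omega e\,e_j\,dx$ is exactly the bridge needed to pass from $L^2$-orthogonality to $\mathcal{X}^{1,2}_{\mathcal{D}}(U)$-orthogonality.
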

\begin{proof}
 Let \(h \in \mathbb{N}_0\), such that \eqref{eq230} holds. From Theorem \ref{p210}, we know that every element of \(\operatorname{span}\{e_k, \ldots, e_{k+h}\}\) is an eigenfunction of problem \eqref{dd2.2} corresponding to the eigenvalue \(\lambda_k = \cdots = \lambda_{k+h}\). Therefore, it remains to prove that any nonzero eigenfunction \(\phi \not\equiv 0\) associated with \(\lambda_k\) belongs to \(\operatorname{span}\{e_k, \ldots, e_{k+h}\}\).  

To establish this, we decompose  
\[
\mathcal{X}^{1,2}_{\mathcal{D}}(U) = \operatorname{span}\{e_k, \ldots, e_{k+h}\} \oplus \left(\operatorname{span}\{e_k, \ldots, e_{k+h}\}\right)^{\perp}.
\]  
Thus, we can express \(\phi\) as  
\[
\phi = \phi_1 + \phi_2,  
\]  
where  
\begin{equation}\label{eq2031}
\phi_1 \in \operatorname{span}\{e_k, \ldots, e_{k+h}\} \quad \text{and} \quad \phi_2 \in \left(\operatorname{span}\{e_k, \ldots, e_{k+h}\}\right)^{\perp}.
\end{equation}  
In particular, orthogonality implies  
\begin{equation}\label{eq231}
\left\langle \phi_1, \phi_2 \right\rangle_{\mathcal{X}^{1,2}_{\mathcal{D}}(U)} = 0.
\end{equation}  
Since \(\phi\) is an eigenfunction corresponding to \(\lambda_k\), we may write \eqref{dd2.2} and test it against itself. This yields  
\[
\lambda_k \|\phi\|_{L^2(\Omega)}^2 = \eta(\phi)^2 = \eta(\phi_1)^2 + \eta(\phi_2)^2,
\]  
where the equality follows from \eqref{eq231}. 

Furthermore, by Theorem \ref{p210}, the functions \(e_k, \ldots, e_{k+h}\) are eigenfunctions associated with the eigenvalue \(\lambda_k = \cdots = \lambda_{k+h}\). Consequently,  
\begin{equation}\label{eq232}
\phi_1 \text{ is also an eigenfunction corresponding to } \lambda_k.
\end{equation}  
As a result, we can write \eqref{dd2.2} for \(\phi_1\) and test it against \(\phi_2\). Recalling \eqref{eq231}, this gives  
\begin{equation}\label{eq234}
\lambda_k \int_{\Omega} \phi_1(x) \phi_2(x) \, dx = \left\langle \phi_1, \phi_2 \right\rangle_{\mathcal{X}^{1,2}_{\mathcal{D}}(U)} = 0,  
\end{equation}  
which implies  
\[
\int_{\Omega} \phi_1(x) \phi_2(x) \, dx = 0.
\]  
Thus, we have  
\begin{equation}\label{eq235}
\|\phi\|_{L^2(\Omega)}^2 = \|\phi_1 + \phi_2\|_{L^2(\Omega)}^2 = \|\phi_1\|_{L^2(\Omega)}^2 + \|\phi_2\|_{L^2(\Omega)}^2.
\end{equation}  
Now, we can express \(\phi_1\) as  
\[
\phi_1 = \sum_{i=k}^{k+h} a_i e_i,
\]  
where \(a_i \in \mathbb{R}\). Using the orthogonality (see Theorem \ref{p210} and \eqref{eq2011}), we compute 
\begin{equation}\label{eq236}
\eta(\phi_1)^2 = \eta\left(\sum_{i=k}^{k+h} a_i e_i \right)^2 = \sum_{i=k}^{k+h} a_i^2 \eta(e_i)^2 = \sum_{i=k}^{k+h} a_i^2 \lambda_i = \lambda_k \sum_{i=k}^{k+h} a_i^2 = \lambda_k \|\phi_1\|_{L^2(\Omega)}^2.
\end{equation}  
Using \eqref{eq232}, and noting that \(\phi\) is an eigenfunction associated with \(\lambda_k\), we conclude that \(\phi_2\) is also an eigenfunction corresponding to \(\lambda_k\). Therefore, recalling \eqref{eq230} and using [\cite{JTL}, Lemma 3.6], we deduce 
\[
\left\langle \phi_2, e_1 \right\rangle_{\mathcal{X}^{1,2}_{\mathcal{D}}(U)} = \cdots = \left\langle \phi_2, e_{k-1} \right\rangle_{\mathcal{X}^{1,2}_{\mathcal{D}}(U)} = 0.
\]  
This, together with \eqref{eq2031}, implies  
\begin{equation}\label{eq2034}
\phi_2 \in \left(\operatorname{span}\{e_1, \ldots, e_{k+h}\}\right)^{\perp} = \mathcal{P}_{k+h+1}.
\end{equation}  
Now, we aim to prove that  
\begin{equation}\label{eq2038}
\phi_2 \equiv 0.
\end{equation}  
To prove this, assume by contradiction that \(\phi_2 \not\equiv 0\). Using \eqref{eq206} and \eqref{eq2034}, we write  
\begin{equation}\label{eq238}
\begin{aligned}
\lambda_k < \lambda_{k+h+1} &= \min_{u \in \mathcal{P}_{k+h+1} \setminus \{0\}} \frac{\int_{\Omega} |\nabla u|^2 \, dx + \int_{Q} \frac{|u(x) - u(y)|^2 \, dx \, dy}{|x-y|^{n-2s}}}{\int_{\Omega} |u(x)|^2 \, dx} \\
&\leq \frac{\int_{\Omega} |\nabla \phi_2|^2 \, dx + \int_{Q} \frac{|\phi_2(x) - \phi_2(y)|^2 \, dx \, dy}{|x-y|^{n-2s}}}{\int_{\Omega} |\phi_2(x)|^2 \, dx} = \frac{\eta(\phi_2)^2}{\|\phi_2\|_{L^2(\Omega)}^2}.
\end{aligned}
\end{equation}  
From using \eqref{eq234}, \eqref{eq235}, \eqref{eq236}, and \eqref{eq238} then we obtain that
$$
\begin{aligned}
\lambda_k\|\phi\|_{L^2(\Omega)}^2  =\eta(\phi_1)^2+\eta(\phi_2)^2  >\lambda_k\left\|\phi_1\right\|_{L^2(\Omega)}^2+\lambda_k\left\|\phi_2\right\|_{L^2(\Omega)}^2 =\lambda_k\|\phi\|_{L^2(\Omega)}^2
\end{aligned}
$$
which is a contradiction. Thus, the claim in \eqref{eq2038} is established. Combining \eqref{eq2031} and \eqref{eq2038}, we conclude that  
\[
\phi = \phi_1 \in \operatorname{span}\{e_k, \ldots, e_{k+h}\},
\]  
as required. This completes the proof of the proposition.
\end{proof}
The following remark emphasizes key aspects of variational inequalities.
\begin{remark}  
\begin{enumerate}  
    \item For any \( u \in \mathrm{span}(u_1, \ldots, u_k)^\perp = \mathcal{P}_{k+1} \), the following inequality holds true:  
    \[  
    \lambda_{k+1} \int_{\Omega} u^2 \, dx \leq \int_{\Omega} |\nabla u|^2 \, dx + \int_{Q} \frac{|u(x) - u(y)|^2}{|x-y|^{n+2s}} \, dx \, dy.  
    \]  

    \item For any \( u \in \mathrm{span}(u_1, \ldots, u_k) \), the following inequality holds true:  
    \[  
    \int_{\Omega} |\nabla u|^2 \, dx + \int_{Q} \frac{|u(x) - u(y)|^2}{|x-y|^{n+2s}} \, dx \, dy \leq \lambda_k \int_{\Omega} u^2 \, dx.  
    \]  
\end{enumerate}  
\end{remark}

In Theorem \ref{p2.9}, we presented a variational characterization of \(\lambda_k\) using the subspace \(\mathcal{P}_{k+1}\). In the next result, we establish a different variational description of the eigenvalues of \(\mathcal{L}\), formulated within the context of a finite-dimensional space.

\begin{Lemma}
Let \(\{\lambda_k\}_{k \in \mathbb{N}}\) denote the sequence of eigenvalues associated of problem \eqref{dd2.2}, ordered as,  
\[
0 < \lambda_1 < \lambda_2 \leq \cdots \leq \lambda_k \leq \lambda_{k+1} \leq \ldots,
\]  
and let \(\{e_k\}_{k \in \mathbb{N}}\) represent the sequence of eigenfunctions corresponding to these eigenvalues. For any \(k \in \mathbb{N}\), the eigenvalue \(\lambda_k\) can be characterized by the following variational formula  
\[
\lambda_k = \max_{u \in \operatorname{Span}\{e_1, \ldots, e_k\} \setminus \{0\}} \frac{\int_{\Omega} |\nabla u(x)|^2 \, dx + \int_{Q} \frac{|u(x) - u(y)|^2}{|x-y|^{n-2s}} \, dx \, dy}{\int_{\Omega} |u(x)|^2 \, dx}.
\]
    \end{Lemma}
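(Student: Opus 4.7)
The plan is a direct Rayleigh-quotient computation, taking full advantage of the orthogonality established in Theorem \ref{p210} together with the normalization $\|e_i\|_{L^2(\Omega)}=1$ from Theorem \ref{p2.9}(2). Every $u$ in $\operatorname{Span}\{e_1,\ldots,e_k\}$ admits a unique expansion $u=\sum_{i=1}^k a_i e_i$ with $a_i\in\R$, and assuming $u\not\equiv 0$ at least one coefficient is nonzero. I will compute numerator and denominator of the quotient separately and exhibit the supremum explicitly.

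For the denominator, the $L^2(\Omega)$-orthonormality of $\{e_i\}$ yields
$$\|u\|_{L^2(\Omega)}^2=\sum_{i=1}^k a_i^2.$$
For the numerator, I would use the orthogonality of $\{e_i\}$ in $\mathcal{X}^{1,2}_{\mathcal{D}}(U)$ (see \eqref{eq216}) together with the identity $\eta(e_i)^2=\lambda_i$ from \eqref{eq2011}, to obtain
$$\eta(u)^2=\Bigl\langle\sum_{i=1}^k a_i e_i,\sum_{j=1}^k a_j e_j\Bigr\rangle_{\mathcal{X}^{1,2}_{\mathcal{D}}(U)}=\sum_{i=1}^k a_i^2\,\eta(e_i)^2=\sum_{i=1}^k a_i^2\lambda_i.$$
Combining the two lines, the Rayleigh quotient is
$$\frac{\eta(u)^2}{\|u\|_{L^2(\Omega)}^2}=\frac{\sum_{i=1}^k a_i^2\lambda_i}{\sum_{i=1}^k a_i^2},$$
which is a convex combination of $\lambda_1,\ldots,\lambda_k$.

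Since the eigenvalues are ordered by \eqref{ineql} as $\lambda_1<\lambda_2\leq\cdots\leq\lambda_k$, this convex combination is bounded above by $\lambda_k$, giving the inequality
$$\max_{u\in\operatorname{Span}\{e_1,\ldots,e_k\}\setminus\{0\}}\frac{\eta(u)^2}{\|u\|_{L^2(\Omega)}^2}\leq\lambda_k.$$
To see that the bound is attained (so the supremum is a maximum and equals $\lambda_k$), I would simply test the quotient at $u=e_k$, for which $\eta(e_k)^2=\lambda_k$ and $\|e_k\|_{L^2(\Omega)}^2=1$ by Theorem \ref{p2.9}(2). This gives equality and finishes the proof.

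I do not expect a genuine obstacle: the statement is essentially the dual (max over a finite-dimensional span) of the min-max characterization \eqref{eq206} already proved via $\mathcal{P}_{k+1}$, and all the structural tools (orthogonality in both $L^2(\Omega)$ and $\mathcal{X}^{1,2}_{\mathcal{D}}(U)$, normalization $\|e_i\|_{L^2(\Omega)}=1$, and $\eta(e_i)^2=\lambda_i$) are already in hand. The only point requiring a little care is a clean use of the bilinearity of $\langle\cdot,\cdot\rangle_{\mathcal{X}^{1,2}_{\mathcal{D}}(U)}$ together with \eqref{eq216} to ensure that all cross terms $a_ia_j\langle e_i,e_j\rangle_{\mathcal{X}^{1,2}_{\mathcal{D}}(U)}$ with $i\neq j$ vanish; once this is stated, the remainder of the argument reduces to the elementary fact that a convex combination of real numbers is bounded above by their maximum.
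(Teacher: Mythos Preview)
Your proposal is correct and follows essentially the same approach as the paper's proof: both expand $u=\sum_i a_i e_i$, use the $L^2(\Omega)$-orthonormality and the $\mathcal{X}^{1,2}_{\mathcal{D}}(U)$-orthogonality (together with $\eta(e_i)^2=\lambda_i$) to rewrite the Rayleigh quotient as $\frac{\sum a_i^2\lambda_i}{\sum a_i^2}$, bound it by $\lambda_k$ via the ordering of the eigenvalues, and attain equality at $u=e_k$. The only cosmetic difference is the order in which the two inequalities are presented.
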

\begin{proof}
 Let \(k \in \mathbb{N}\). Since \(\lambda_k\) is the eigenvalue corresponding to the eigenfunction \(e_k\), we have
\begin{equation}\label{lambda_k}
\begin{aligned}
\lambda_k & = \frac{\int_{\Omega} |\nabla e_k(x)|^2 \, dx + \int_{Q} \frac{|e_k(x) - e_k(y)|^2}{|x-y|^{n-2s}} \, dx \, dy}{\int_{\Omega} |e_k(x)|^2 \, dx} \\
& \leq \max_{u \in \operatorname{Span}\{e_1, \ldots, e_k\} \setminus \{0\}} \frac{\int_{\Omega} |\nabla u(x)|^2 \, dx + \int_{Q} \frac{|u(x) - u(y)|^2}{|x-y|^{n-2s}} \, dx \, dy}{\int_{\Omega} |u(x)|^2 \, dx}.
\end{aligned}
\end{equation}
Now, consider any \(u \in \operatorname{Span}\{e_1, \ldots, e_k\} \setminus \{0\}\). We can express \(u\) as  
\[
u = \sum_{i=1}^k u_i e_i, \quad \text{with }~ \sum_{i=1}^k u_i^2 \neq 0.
\]
For such \(u\), we have  
\[
\frac{\int_{\Omega} |\nabla u(x)|^2 \, dx + \int_{Q} \frac{|u(x) - u(y)|^2}{|x-y|^{n-2s}} \, dx \, dy}{\int_{\Omega} |u(x)|^2 \, dx} = \frac{\sum_{i=1}^k u_i^2 \eta(e_i)}{\sum_{i=1}^k u_i^2} = \frac{\sum_{i=1}^k u_i^2 \lambda_i}{\sum_{i=1}^k u_i^2}.
\]
Since \(\{e_1, \ldots, e_k\}\) are orthonormal in \(L^2(\Omega)\) and orthogonal in \(\mathcal{X}^{1,2}_{\mathcal{D}}(U)\) (as stated in Theorem \ref{p210}), and since \(\lambda_k \geq \lambda_i\) for all \(i = 1, \ldots, k\), it follows that  
\[
\frac{\sum_{i=1}^k u_i^2 \lambda_i}{\sum_{i=1}^k u_i^2} \leq \lambda_k.
\]
Taking the maximum over \(u \in \operatorname{Span}\{e_1, \ldots, e_k\} \setminus \{0\}\), we obtain  
\[
\max_{u \in \operatorname{Span}\{e_1, \ldots, e_k\} \setminus \{0\}} \frac{\int_{\Omega} |\nabla u(x)|^2 \, dx + \int_{Q} \frac{|u(x) - u(y)|^2}{|x-y|^{n-2s}} \, dx \, dy}{\int_{\Omega} |u(x)|^2 \, dx} \leq \lambda_k.
\]
We conclude that the assertion holds by combining this with \eqref{lambda_k}.
    \end{proof}
\section*{Acknowledgements} 
 Lovelesh Sharma received assistance from the UGC Grant with reference no. 191620169606 funded by the Government
of India.
\vspace{0.5cm}

\section*{Data availability} Data sharing not applicable to this article as no datasets were generated or analysed during the current study.

\section*{Competing interests}
The authors declare that there is no competing interest between them.

\end{document}